\newcommand{\fraka}{\mathfrak{a}}
\newcommand{\frakg}{\mathfrak{g}}
\newcommand{\frakl}{\mathfrak{l}}
\newcommand{\frakm}{\mathfrak{m}}
\newcommand{\frakn}{\mathfrak{n}}
\newcommand{\frako}{\mathfrak{o}}
\newcommand{\CC}{\mathbb{C}}
\newcommand{\HH}{\mathbb{H}}
\newcommand{\NN}{\mathbb{N}}
\newcommand{\RR}{\mathbb{R}}
\newcommand{\calD}{\mathcal{D}}
\newcommand{\calF}{\mathcal{F}}
\newcommand{\calS}{\mathcal{S}}
\newcommand{\calU}{\mathcal{U}}
\newcommand{\0}{{\bf 0}}
\renewcommand{\1}{{\bf 1}}
\DeclareMathOperator{\so}{\mathfrak{so}}
\DeclareMathOperator{\Ind}{Ind}
\DeclareMathOperator{\tr}{tr}
\DeclareMathOperator{\ad}{ad}
\DeclareMathOperator{\Ad}{Ad}
\DeclareMathOperator{\Hom}{Hom}
\DeclareMathOperator{\id}{id}
\DeclareMathOperator{\diag}{diag}
\DeclareMathOperator{\PV}{P.V.}
\renewcommand\Re{\operatorname{Re}}
\newsavebox{\@brx}
\newcommand{\llangle}[1][]{\savebox{\@brx}{\(\m@th{#1\langle}\)}%
  \mathopen{\copy\@brx\kern-0.5\wd\@brx\usebox{\@brx}}}
\newcommand{\rrangle}[1][]{\savebox{\@brx}{\(\m@th{#1\rangle}\)}%
  \mathclose{\copy\@brx\kern-0.5\wd\@brx\usebox{\@brx}}}
\theoremstyle{plain}
\newtheorem{theorem}{Theorem}[section]
\newtheorem{proposition}[theorem]{Proposition}
\newtheorem{lemma}[theorem]{Lemma}
\newtheorem{corollary}[theorem]{Corollary}
\newtheorem{fact}[theorem]{Fact}
\newtheorem{thmalph}{Theorem}
\theoremstyle{definition}
\numberwithin{equation}{section}
\title{An extension problem related to the fractional Branson--Gover operators}
\author{Jan Frahm}
\address{Department Mathematik, FAU Erlangen-N\"{u}rnberg, Cauerstr. 11, 91058 Erlangen, Germany}
\email{frahm@math.fau.de}
\author{Bent {\O}rsted}
\address{Institut for Matematiske Fag, Aarhus Universitet, Ny Munkegade 118, 8000 Aarhus C, Denmark}
\email{orsted@math.au.dk}
\author{Genkai Zhang}
\address{Mathematical Sciences, Chalmers University of Technology and the University of Gothenburg, SE-412 96 G\"{o}teborg, Sweden}
\email{genkai@chalmers.se}
\thanks{Research by G. Zhang partially supported by the Swedish Science Council (VR)}
\begin{document}

\begin{abstract}
The Branson--Gover operators are conformally invariant differential operators of even degree acting on differential forms. They can be interpolated by a holomorphic family of conformally invariant integral operators called \emph{fractional Branson--Gover operators}. For Euclidean spaces we show that the fractional Branson--Gover operators can be obtained as Dirichlet-to-Neumann operators of certain conformally invariant boundary value problems, generalizing the work of Caffarelli--Silvestre for the fractional Laplacians to differential forms. The relevant boundary value problems are studied in detail and we find appropriate Sobolev type spaces in which there exist unique solutions and obtain the explicit integral kernels of the solution operators as well as some of its properties.
\end{abstract}

\maketitle

\section*{Introduction}

Classical harmonic analysis in Euclidean space deals to a large extent with the analysis and geometry of the Laplace operator; for boundary value problems for harmonic functions one studies Poisson integral operators, and also analogous problems involving fractional powers of the Laplacian have become very important in recent years. Not only Euclidean geometry plays a role here, but also conformal geometry -- and singular elliptic boundary value problems lead to new insight about exactly the fractional Laplacians as observed by Caffarelli--Silvestre in their influencial paper~\cite{CS07}. Their observations have had a huge impact within the PDE community and their interpretation of the fractional Laplacian as Dirichlet-to-Neumann operator of a singular elliptic boundary value problem has for instance been generalized to conformally compact Einstein manifolds~\cite{CG11}.

In this work we attempt to extend their theory for functions and distributions to the case of differential forms; these are also important for physical theories -- as would also be other types of fields and vector bundles. In particular there is an interesting family of integral operators analogous to the Poisson transform with both nice analytic and geometric properties. The corresponding Dirichlet-to-Neumann operators on differential forms which play the role of the fractional Laplacian are the so-called fractional Branson--Gover operators. They do not interpolate between powers of the Laplacian on differential forms, but instead between their conformally invariant analogs, the Branson--Gover operators, which play an important role in conformal geometry.

\subsection*{A boundary value problems for differential forms}

On $\RR^n$ ($n\geq2$) we consider the standard Euclidean metric. The space $\Omega^p(\RR^n)$ of smooth $p$-forms on $\RR^n$ will be identified with $C^\infty(\RR^n)\otimes\bigwedge^p\CC^n$. In this way we can view $\calS'(\RR^n)\otimes\bigwedge^p\CC^n$ as distribution-valued $p$-forms. We write $e_i$ for the standard basis vectors in $\CC^n$ and denote by $\varepsilon_x$ and $i_x$ the exterior and interior multiplication on $\bigwedge^\bullet\CC^n$ by $x\in\CC^n$.

For $0\leq p\leq n$ and $a\in\RR$ we consider the following second order differential operator on differential $p$-forms on $\RR^n$:
\begin{equation}
 \Delta_{a,p} := x_n^2\Delta + ax_n\frac{\partial}{\partial x_n} + 2x_n(i_{e_n}d'-\varepsilon_{e_n}\delta')-(n-2p)\varepsilon_{e_n}i_{e_n},\label{eq:DefDiffOp}
\end{equation}
where $\Delta$ is the Euclidean Laplacian on differential forms and
\begin{equation}\label{eq:DefDandDelta}
 d' = \sum_{j=1}^{n-1}\varepsilon_{e_j}\frac{\partial}{\partial x_j}, \qquad \delta' = -\sum_{j=1}^{n-1}i_{e_j}\frac{\partial}{\partial x_j}
\end{equation}
are the Euclidean differential and codifferential on the subspace $\RR^{n-1}$.

The appropriate Hilbert space on which this operator acts is a \emph{homogeneous Sobolev space} which is most easily defined in terms of the Euclidean Fourier transform $\widehat{u}$ of a $p$-form $u$ (see Section~\ref{sec:CSandSobolev} for details):
$$ \dot{H}^{s,p}(\RR^n) = \left\{u\in\calS'(\RR^n)\otimes\bigwedge^p\CC^n:\int_{\RR^n}|\xi|^{2s}\|\widehat{u}(\xi)\|^2\,d\xi<\infty\right\}. $$
Note that $\dot{H}^{0,p}(\RR^n)=L^{2,p}(\RR^n)$ is the space of $L^2$-forms of degree $p$. Instead of working with the obvious norm on $\dot{H}^{s,p}(\RR^n)$, we use a slightly different but equivalent norm $\|\cdot\|_{s,p}$ which has the advantage that it is conformally invariant (see Proposition~\ref{prop:CSInnerProduct} for the precise definition).

To state the boundary value problem we remark that by the Sobolev Trace Theorem (see Corollary~\ref{cor:SobolevTraceThm}) there exists for $\frac{1}{2}<s<\frac{n}{2}$ a restriction map
$$ \dot{H}^{s,p}(\RR^n)\to\dot{H}^{s-\frac{1}{2},p}(\RR^{n-1}), \quad u\mapsto u|_{\RR^{n-1}}, $$
which agrees with the pullback by the embedding $\RR^{n-1}\hookrightarrow\RR^n$ on smooth differential forms. We further note that by duality $\bigwedge^{n-p}\RR^n\simeq\bigwedge^p\RR^n$ it suffices to consider the case $0\leq p\leq\frac{n}{2}$.

\begin{thmalph}[{see Section~\ref{sec:DiffSBOs}}]\label{thm:Dirichlet}
Assume $0\leq p\leq\frac{n}{2}$.
\begin{enumerate}
\item For $2-n+2p<a\leq2$ the operator $\Delta_{a,p}$ is essentially self-adjoint on the homogeneous Sobolev space $\dot{H}^{\frac{2-a}{2},p}(\RR^n)$ with respect to the conformally invariant norm $\|\cdot\|_{\frac{2-a}{2},p}$. Its point spectrum contains $\{k(k+a-1):k\in\NN,k<\frac{1-a}{2}\}$.
\item For $2-n+2p<a<1$ and $f\in\dot{H}^{\frac{1-a}{2},p}(\RR^{n-1})$ the Dirichlet problem
\begin{equation}
 \Delta_{a,p}u = 0, \quad u|_{\RR^{n-1}} = f\label{eq:IntroBdyValueProblem}
\end{equation}
has a unique solution $u\in\dot{H}^{\frac{2-a}{2},p}(\RR^n)$.
\end{enumerate}
\end{thmalph}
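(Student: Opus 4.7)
The whole proof rests on the partial Fourier transform $\calF'$ in the variables $x'=(x_1,\ldots,x_{n-1})$, which converts $\Delta_{a,p}$ into a family of matrix-valued ordinary differential operators in $x_n$ parametrized by $\xi'\in\RR^{n-1}$. Setting $v(\xi',x_n)=\widehat u(\xi',x_n)$, under $\calF'$ the operator $d'$ becomes $i\varepsilon_{\xi'}$, $\delta'$ becomes $-i\,i_{\xi'}$, and the tangential Laplacian becomes multiplication by $-|\xi'|^2$. Splitting each fiber $v\in\bigwedge\nolimits^p\CC^n$ by the normal direction as $v=\alpha+dx_n\wedge\beta$ with $\alpha\in\bigwedge\nolimits^p\CC^{n-1}$, $\beta\in\bigwedge\nolimits^{p-1}\CC^{n-1}$, and further splitting each of $\alpha,\beta$ into a piece in the image of $\varepsilon_{\xi'}$ and a piece in $\ker i_{\xi'}$, decouples the fiber equation $\widehat{\Delta_{a,p}u}=0$ into two independent scalar ODEs (the ``pure tangential'' and ``pure transverse'' pieces) plus one coupled $2\times 2$ block (on which the middle term $2x_n(i_{e_n}d'-\varepsilon_{e_n}\delta')$ acts non-trivially).

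\textbf{Bessel analysis.} After rescaling $t=|\xi'|x_n$ and extracting a factor $t^{(1-a)/2}$, each scalar ODE takes the modified Bessel form $t^2w''+tw'-(t^2+\nu^2)w=0$ with an index $\nu$ depending on $a,n,p$; the $2\times 2$ block diagonalizes explicitly into two further Bessel equations, with indices that tend to $0$ as $a$ approaches the threshold $2-n+2p$. The admissible solution on $(0,\infty)$ is the Macdonald function $K_\nu(|\xi'|x_n)$; its companion $I_\nu$ grows exponentially at infinity and so is ruled out on any Sobolev space. The standard power behaviour $t^{\pm\nu}$ of $K_\nu$ at $t=0$ together with $a>2-n+2p$ makes all the relevant indices strictly positive and controls the trace at $x_n=0$.

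\textbf{From fibers to (1) and (2).} For (2), given $f\in\dot H^{(1-a)/2,p}(\RR^{n-1})$, define $u$ fiberwise by the unique decaying Bessel solution with trace $\widehat f(\xi')$ at $x_n=0$; Plancherel turns $\|u\|_{(2-a)/2,p}^2$ into an integral of $\|\widehat f(\xi')\|^2$ against an explicit classical Bessel moment $\int_0^\infty t^{2-a}K_\nu(t)^2\,dt$, convergent and positive in the stated range. This yields existence and the Sobolev bound; uniqueness follows because the only fiber solution in $\dot H^{(2-a)/2,p}$ is the decaying one. For (1), $\Delta_{a,p}$ is formally symmetric for the conformally invariant inner product (by an integration-by-parts using the normalisation of Proposition~\ref{prop:CSInnerProduct}), and each fiber operator is a singular Sturm--Liouville system whose coefficients have a regular singularity at $0$ and an exponential coefficient at $\infty$; the condition $a>2-n+2p$ places us in the limit-point case at both endpoints, which gives essential self-adjointness fiber by fiber and hence, via Plancherel, globally. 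The point spectrum is produced by choosing, for $0\leq k<(1-a)/2$, the polynomial (Laguerre/Jacobi-type) solutions of the fiber ODEs, which integrate over $\xi'$ against suitable exponential kernels to give $L^2$-eigenforms of $\Delta_{a,p}$ with eigenvalue $k(k+a-1)$.

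\textbf{Main obstacle.} The principal technical step is the diagonalization of the $2\times 2$ block: the cross-term $2x_n(i_{e_n}d'-\varepsilon_{e_n}\delta')-(n-2p)\varepsilon_{e_n}i_{e_n}$ entangles the $\xi'$-parallel part of $\alpha$ with the $\xi'$-perpendicular part of $\beta$, and identifying the correct eigen-directions of this coupled system (exploiting the anticommutation relations $\{i_{e_n},\varepsilon_{e_n}\}=1$ and $\{i_{\xi'},\varepsilon_{\xi'}\}=|\xi'|^2$) is the crux of the reduction to scalar Bessel equations. A secondary delicate point is matching the conformally invariant normalisation of $\|\cdot\|_{s,p}$ precisely to the Bessel moment integrals, which is what makes the eigenvalue formula $k(k+a-1)$ come out with integer coefficients rather than some $a,n,p$-dependent constant.
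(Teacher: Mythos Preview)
Your route is genuinely different from the paper's and is in principle viable, but two steps do not work as stated.

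\textbf{Comparison.} The paper never takes a partial Fourier transform in $x'$. For part~(1) it identifies $\Delta_{a,p}$ (up to an additive constant) with the action of the Casimir element of $\frakg'=\frako(1,n)$ in the unitary representation $\pi_{\lambda,p}$ on $\dot H^{(2-a)/2,p}$, $\lambda=\tfrac{a-2}{2}$; essential self-adjointness then drops out of the general fact that Casimir acts self-adjointly in any unitary representation, and the eigenvalues $k(k+a-1)$ come from the discrete summands $C_{\lambda,\nu,p}^*(J_{\nu,p})$, $\nu=\lambda+\tfrac12+2k$, produced by the adjoints of the differential symmetry breaking operators. For part~(2) existence is again representation-theoretic (the adjoint of the restriction map), while uniqueness is proved by taking the \emph{full} Fourier transform in all $n$ variables: writing $z=\xi_n/|\xi'|$ turns $\widehat{\Delta_{a,p}u}=0$ into a system of ODEs in $z$ with a regular singularity at $z=\infty$ that reduces to \emph{hypergeometric} (not Bessel) equations; the $L^2$-condition with weight $|\xi|^{2-a}$ singles out the unique admissible branch. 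Your partial-Fourier/Bessel picture is the $x_n$-side of the same story and is equivalent after a further Fourier transform in $x_n$ (the Fourier transform of $t^{(1-a)/2}K_{(1-a)/2}(t)$ is, up to constants, $(1+z^2)^{(a-2)/2}$), so the decoupling into two scalar blocks plus one $2\times2$ block matches the paper's components I, IV and \{II,III\}.

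\textbf{Two concrete gaps.} First, the conformally invariant norm $\|\cdot\|_{(2-a)/2,p}$ is defined through the full Fourier transform with weight $|\xi|^{2-a}$ (Proposition~\ref{prop:CSInnerProduct}); after only a partial Fourier transform in $x'$ it is \emph{not} a weighted $L^2$-norm in $x_n$, so the step ``Plancherel turns $\|u\|^2$ into $\int_0^\infty t^{2-a}K_\nu(t)^2\,dt$'' is not literally correct, and more seriously the ``limit-point at both endpoints'' argument for essential self-adjointness does not apply: fibrewise Sturm--Liouville theory presupposes a local inner product in $x_n$, which you do not have here. To push your approach through you would need to transfer the problem to a genuine weighted $L^2(\HH^n)$ picture (e.g.\ via the gauge of Appendix~\ref{sec:Laplacian}) or else pass to the full Fourier side as the paper does. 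Second, in your Bessel picture the fibrewise eigenfunctions for the eigenvalue $k(k+a-1)$ are not polynomial: on the pure-tangential block the equation $t^2v''+atv'-(t^2+k(k+a-1))v=0$ has decaying solution $t^{(1-a)/2}K_{(1-a)/2-k}(t)$, i.e.\ a Macdonald function with shifted index, not a Laguerre/Jacobi polynomial. (Polynomial factors appear only on the \emph{full} Fourier side, where the eigenforms are Gegenbauer-type polynomials in $z$ times $(1+z^2)^{(a-2)/2}$.) The paper bypasses any explicit eigenfunction computation by invoking Schur's Lemma on the irreducible $G'$-summands $C_{\lambda,\nu,p}^*(J_{\nu,p})$, which is what makes the eigenvalue formula come out cleanly.
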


It is worth mentioning that the differential operator $\Delta_{a,p}$ is invariant under the action of the conformal group of the subspace $\RR^{n-1}\subseteq\RR^n$ (but not under the action of the conformal group of $\RR^n$).

\subsection*{The Poisson transform}

For a bounded function $f$ on $\RR^{n-1}$ we define
$$ P_{a,p}f(x) = c_{a,p}\int_{\RR^{n-1}}\frac{|x_n|^{1-a}}{(|x'-y|^2+x_n^2)^{\frac{n-a+2}{2}}}\big(i_{x-y}\varepsilon_{x-y}-\varepsilon_{x-y}i_{x-y}\big)f(y)\,dy \qquad (x\in\RR^n) $$
where $c_{a,p}=2\pi^{-\frac{n-1}{2}}\Gamma(\frac{n-a+2}{2})\Gamma(\frac{1-a}{2})^{-1}(n-2p-a)^{-1}$. The integral operator $P_{a,p}$ turns out to be the Poisson transform of the boundary value problem \eqref{eq:IntroBdyValueProblem} and it extends to an isometry (up to a scalar) between the corresponding homogeneous Sobolev spaces:

\begin{thmalph}[{see Section~\ref{sec:IntSBOs}}]\label{thm:Poisson}
Assume $0\leq p\leq\frac{n}{2}$ and $2-n+2p<a<1$.
\begin{enumerate}
\item The integral operator $P_{a,p}$ extends to a continuous linear operator
$$ P_{a,p}:\dot{H}^{\frac{1-a}{2},p}(\RR^{n-1})\to\dot{H}^{\frac{2-a}{2},p}(\RR^n), $$
which maps $f$ to the unique solution $u=P_{a,p}f$ of the boundary value problem \eqref{eq:IntroBdyValueProblem}.
\item The operator $P_{a,p}$ is isometric up to a constant. More precisely,
$$ \|P_{a,p}f\|_{\frac{2-a}{2},p}^2 = \frac{2\sqrt{\pi}(n-2p-a+2)\Gamma(\frac{2-a}{2})}{(n-2p-a)\Gamma(\frac{1-a}{2})}\cdot\|f\|_{\frac{1-a}{2},p}^2. $$
\end{enumerate}
\end{thmalph}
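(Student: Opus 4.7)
The plan is to reduce to a one-dimensional problem via partial Fourier transform in the tangential variables $x'\in\RR^{n-1}$, to solve the resulting ODE explicitly in terms of modified Bessel functions, and then to use Plancherel together with a Bessel-moment identity to obtain the norm equality. The first step is to take $\calF_{x'}$, which converts the convolution structure in $x'$ into multiplication by an operator-valued symbol $\Phi(\xi',x_n)\in\End\bigl(\bigwedge^p\CC^n\bigr)$ so that
\[
  \calF_{x'}[P_{a,p}f](\xi',x_n) \;=\; \Phi(\xi',x_n)\,\widehat f(\xi').
\]
Writing $x-y=(x'-y)+x_ne_n$ and expanding $i_{x-y}\varepsilon_{x-y}-\varepsilon_{x-y}i_{x-y}$ as a polynomial in $\varepsilon_{e_n},i_{e_n}$ and the components of $x'-y$, those components of $x'-y$ pass under $\calF_{x'}$ to derivatives in $\xi'$, while the scalar kernel $(|x'-y|^2+x_n^2)^{-\nu}$ is handled by the classical Bessel formula
\[
  \calF_{x'}\bigl[(|x'|^2+t^2)^{-\nu}\bigr](\xi') \;=\; \frac{(2\pi)^{(n-1)/2}\,2^{1-\nu}}{\Gamma(\nu)}\, t^{\frac{n-1}{2}-\nu}\,|\xi'|^{\nu-\frac{n-1}{2}}\, K_{\nu-\frac{n-1}{2}}(t|\xi'|).
\]
The resulting $\Phi(\xi',x_n)$ takes the form $|x_n|^{(1-a)/2}$ times a finite sum of operators built from $I$, $\varepsilon_{e_n}i_{e_n}$ and mixed products involving $\varepsilon_{\xi'},i_{\xi'}$, with coefficients of type $K_{(3-a)/2}(|x_n||\xi'|)$ and $K_{(1-a)/2}(|x_n||\xi'|)$.

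Next I would verify the two defining properties of the Dirichlet solution. After partial Fourier transform, $\Delta_{a,p}u=0$ becomes a second-order ODE in $x_n$ (for each fixed $\xi'\neq 0$) whose $L^2$-decaying fundamental solutions are precisely the $|x_n|^{(1-a)/2}K_\nu(|x_n||\xi'|)$ that appear in $\Phi$, so the PDE is satisfied. The trace identity $u|_{\RR^{n-1}}=f$ then reduces to showing $\Phi(\xi',x_n)\to I$ as $x_n\to 0^+$, which I would deduce from $K_\nu(t)\sim\tfrac12\Gamma(\nu)(t/2)^{-\nu}$ combined with the precise value of the constant $c_{a,p}$. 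Continuity $P_{a,p}\colon\dot H^{(1-a)/2,p}(\RR^{n-1})\to\dot H^{(2-a)/2,p}(\RR^n)$ will follow by Plancherel from the symbol estimate, while uniqueness of the solution is furnished by Theorem~\ref{thm:Dirichlet}(2).

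For part~(2), I would use the Fourier-side description of the conformally invariant norm (Proposition~\ref{prop:CSInnerProduct}). Plancherel in $x'$ reduces $\|P_{a,p}f\|_{(2-a)/2,p}^2$ to an integral in $\xi'$ of a quadratic form in $\widehat f(\xi')$ whose coefficients are $x_n$-integrals of $|\Phi(\xi',x_n)|^2$ against the corresponding Fourier weight. Substituting $t=|x_n||\xi'|$ factors out the correct $|\xi'|^{1-a}$ so as to match the $\|f\|_{(1-a)/2,p}^2$-side, and the remaining $t$-integrals reduce to the Bessel-moment identity
\[
  \int_0^\infty K_\nu(t)^2\,t^{2\mu-1}\,dt \;=\; \frac{\sqrt\pi\,\Gamma(\mu-\nu)\,\Gamma(\mu)\,\Gamma(\mu+\nu)}{4\,\Gamma(\mu+\tfrac12)}.
\]
Combined with $c_{a,p}^2$ and applications of the Gamma duplication and shift identities, this should collapse to the stated constant $\frac{2\sqrt\pi(n-2p-a+2)\Gamma((2-a)/2)}{(n-2p-a)\Gamma((1-a)/2)}$.

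The hard part, as I see it, is twofold. First, the bookkeeping of the Clifford-algebra-valued symbol $\Phi$: one must carefully expand $i_{x-y}\varepsilon_{x-y}-\varepsilon_{x-y}i_{x-y}$ and follow each term under $\calF_{x'}$, keeping track of the mixed products of $\varepsilon_{e_n},i_{e_n}$ with $\varepsilon_{\xi'},i_{\xi'}$. Second, the combinatorics producing the norm constant: the factor $(n-2p-a+2)/(n-2p-a)$, which couples the form degree $p$ with $a$, will emerge only after correctly summing contributions from the four subspaces of $\bigwedge^p\CC^n$ arising from the tangential/normal split with respect to $e_n$ together with the parallel/perpendicular split with respect to $\xi'$.
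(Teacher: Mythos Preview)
Your plan for part~(1) is sound and represents a genuinely different route from the paper. The paper argues abstractly: by Corollary~\ref{cor:DiscreteSummands} the adjoint $C_{\lambda,\nu,p}^*$ of the restriction map already furnishes a bounded $G'$-intertwiner $J_{\nu,p}\to I_{\lambda,p}$ solving the boundary value problem, and then the Multiplicity One Theorem (Fact~\ref{fct:MultOneThms}) forces the integral operator $B_{\lambda,\nu,p}$ to be a scalar multiple of it; the constant $c_{a,p}$ is fixed by evaluating on a constant form (Section~\ref{sec:ExplicitValues}). Your direct computation of the partial Fourier symbol $\Phi(\xi',x_n)$ in terms of $K_{(3-a)/2}$ and $K_{(1-a)/2}$ avoids representation theory and is closer in spirit to the original Caffarelli--Silvestre argument; it also yields the boundary behaviour and Sobolev continuity in one stroke. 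The paper's approach, by contrast, explains conceptually \emph{why} the integral kernel must coincide with the solution operator, and why an exact isometry (not merely a bound) is inevitable.

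For part~(2), however, there is a genuine gap. The conformally invariant norm $\|\cdot\|_{(2-a)/2,p}$ is defined via the \emph{full} Fourier transform $\widehat u(\xi',\xi_n)$ (Proposition~\ref{prop:CSInnerProduct}): both the weight $|\xi|^{2-a}$ and the operator $i_\xi\varepsilon_\xi$ involve $\xi_n$, so Plancherel in $x'$ alone does \emph{not} reduce $\|P_{a,p}f\|^2$ to ``$x_n$-integrals of $|\Phi(\xi',x_n)|^2$ against the corresponding Fourier weight''. The Bessel-moment identity you quote evaluates integrals of the form $\int_0^\infty |x_n|^\alpha\,\|\Phi(\xi',x_n)\|^2\,dx_n$, which by Plancherel in $x_n$ correspond to weighted $L^2$ norms in physical $x_n$-space, not to the Sobolev norm with weight $(|\xi'|^2+\xi_n^2)^{(2-a)/2}$ on the Fourier side. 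To salvage your program you would have to take the additional Fourier transform in $x_n$ of $|x_n|^{(1-a)/2}K_\nu(|x_n||\xi'|)$, which recovers exactly the rational-in-$\xi_n$ expressions of Theorem~\ref{thm:FTofPoisson}; at that point the computation collapses to the one in Section~\ref{sec:Isometry}. The paper bypasses Bessel functions entirely: it takes the full Fourier transform from the outset, solves the resulting ODE in $z=\xi_n/|\xi'|$ by reduction to hypergeometric equations (Section~\ref{sec:Uniqueness}), obtains the closed algebraic formula for $\widehat{P_{a,p}f}$, and then integrates in $z$ using only Beta integrals to extract the constant.
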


In Section~\ref{sec:Uniqueness} we even find an explicit formula for the Fourier transform $\widehat{P_{a,p}f}$ of $P_{a,p}f$ in terms of the Fourier transform $\widehat{f}$ of $f$ (see Theorem~\ref{thm:FTofPoisson}). In fact, this formula is used to find the precise constant in the isometry property.

\subsection*{Fractional Branson--Gover operators}

We finally identify the \emph{Dirichlet-to-Neumann map} of the boundary value problem \eqref{eq:IntroBdyValueProblem} as a fractional Branson--Gover operator. For this we consider, instead of the ordinary powers of the Laplacian on differential forms on $\RR^{n-1}$
$$ \Delta^N = (\delta d+d\delta)^N = (\delta d)^N+(d\delta)^N $$
the conformally invariant operators
$$ D_{N,p} = \Big(\frac{n-1}{2}-p+N\Big)(\delta d)^N+\Big(\frac{n-1}{2}-p-N\Big)(d\delta)^N $$
found by Branson--Gover~\cite{BG05}. These operators play an important role in conformal geometry since their construction can be generalized to conformal manifolds. The Branson--Gover operators are interpolated by the \emph{fractional Branson--Gover operators}
$$ L_{s,p}\omega(x) = \frac{1}{\Gamma(-s)}\int_{\RR^n} |y|^{-2s-n-2}(i_y\varepsilon_y-\varepsilon_yi_y)\big[\omega(x+y)-\omega(x)\big]\,dy, $$
which are also conformally invariant. More precisely, for $N\in\NN$ we have (see Fischmann--{\O}rsted~\cite[Corollary 4.6]{FO17})
$$ L_{N,p} = \frac{\pi^{\frac{n}{2}}}{4^N\Gamma(\frac{n}{2}+N+1)}D_{N,p}. $$
In analogy to the work of Caffarelli--Silvestre~\cite{CS07} for the fractional powers $\Delta^s$ of the scalar Laplacian, the fractional Branson--Gover operators $L_{s,p}$ can be interpreted as the Dirichlet-to-Neumann map of the boundary value problem \eqref{eq:IntroBdyValueProblem}:

\begin{thmalph}\label{thm:DtoN}
Assume $0\leq p\leq\frac{n-3}{2}$ and let $s\in(0,1)$ and $a=1-2s$. For $f\in\dot{H}^{\frac{1-a}{2},p}(\RR^{n-1})$ let $u=P_{a,p}f\in\dot{H}^{\frac{2-a}{2},p}(\RR^n)$ be the unique solution of \eqref{eq:IntroBdyValueProblem}. Then
$$ L_{s,p}f(y) = d_{s,p}\lim_{x_n\to0} x_n^a \partial_{x_n}u(x',x_n) $$
with $d_{s,p}=(\Gamma(-s)c_{a,p})^{-1}$.
\end{thmalph}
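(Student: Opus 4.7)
The plan is to compute $\lim_{x_n\to 0^+}x_n^a\partial_{x_n}u$ directly from the explicit Poisson kernel stated in the introduction and then to identify the resulting hypersingular integral with $L_{s,p}f$.

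First I would write $u=P_{a,p}f$ with the kernel
\[ k(x,y) = c_{a,p}\,\frac{x_n^{1-a}}{(|x'-y|^2+x_n^2)^{\frac{n-a+2}{2}}}\bigl(i_{x-y}\varepsilon_{x-y}-\varepsilon_{x-y}i_{x-y}\bigr), \]
use $x-y=(x'-y)+x_n e_n$ to expand the form factor as
\[ i_{x-y}\varepsilon_{x-y}-\varepsilon_{x-y}i_{x-y} = A'_{x'-y} + x_n B_{x'-y} + x_n^2\bigl(i_{e_n}\varepsilon_{e_n}-\varepsilon_{e_n}i_{e_n}\bigr),\qquad A'_z := i_z\varepsilon_z-\varepsilon_z i_z, \]
apply $x_n^a\partial_{x_n}$ and take the pointwise limit $x_n\to 0^+$ for $y\neq x'$. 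Because the $B$- and $C$-contributions carry extra factors of $x_n^{2-a}$ and $x_n^{3-a}$ respectively, their derivatives yield an overall $O(x_n)$ after the $x_n^a$ prefactor and vanish in the limit; only the $x_n^{1-a}A'_{x'-y}$ piece survives, giving the formal identity
\[ \lim_{x_n\to 0^+} x_n^a\partial_{x_n}u(x',x_n) = c_{a,p}(1-a)\,\PV\!\!\int_{\RR^{n-1}}|x'-y|^{-(n-a+2)}\bigl(i_{x'-y}\varepsilon_{x'-y}-\varepsilon_{x'-y}i_{x'-y}\bigr)f(y)\,dy.\]

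Second, substituting $z=y-x'$ and using $a=1-2s$ so that the exponent becomes $-(n-1)-2-2s$ (matching the $(n-1)$-dimensional version of the kernel in $L_{s,p}$), I would compare with the defining integral of $L_{s,p}$ on $\RR^{n-1}$. The regularizer $[f(x'+z)-f(x')]$ in that definition may be inserted without changing the value of the right-hand side because the angular average of $(i_z\varepsilon_z-\varepsilon_z i_z)$ over the unit sphere is a multiple of the identity on $\bigwedge^p\CC^{n-1}$ and the accompanying divergent radial Mellin integral vanishes under analytic continuation. Matching constants and unpacking the $\Gamma$-function identities built into $c_{a,p}$ then produces the asserted normalization $d_{s,p}=(\Gamma(-s)c_{a,p})^{-1}$.

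The main obstacle is justifying the pointwise limit rigorously: the limiting kernel $|x'-y|^{-(n+1+2s)}$ is not locally integrable on $\RR^{n-1}$, so the exchange of limit and integral demands genuine regularization. The cleanest way to handle this is to pass to the Fourier transform in $x'$: by Theorem~\ref{thm:FTofPoisson} one has $\widehat{P_{a,p}f}(\xi',x_n) = M(\xi',x_n)\widehat{f}(\xi')$ with $M$ built from modified Bessel functions, and the small-argument expansion $K_s(r)\sim\tfrac{1}{2}[\Gamma(s)(r/2)^{-s}+\Gamma(-s)(r/2)^s]$ shows that $x_n^a\partial_{x_n}$ kills the constant leading term of $M$ and extracts precisely a multiple of $|\xi'|^{2s}\widehat{f}(\xi')$. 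Matching this limit against the Fourier symbol of $L_{s,p}$ (computable from Fischmann--{\O}rsted~\cite{FO17}) then fixes the constant and makes the argument rigorous even for the general $p$ case, where the operator-valued structure of $M$ must be decomposed into the two $\varepsilon_{e_n}i_{e_n}$-eigenspaces so as to align with the $(\delta d)^N$--$(d\delta)^N$ decomposition underlying $L_{s,p}$.
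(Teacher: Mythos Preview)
Your route differs from the paper's in an essential way. The paper does not differentiate the Poisson kernel; it passes to the difference quotient $x_n^{a-1}(u(x',x_n)-f(x'))$ and then uses the key computation of Section~\ref{sec:ExplicitValues}---that $P_{a,p}$ sends every constant form $\omega\in\bigwedge^p\CC^{n-1}$ to itself---to rewrite this directly as
\[
c_{a,p}\int_{\RR^{n-1}}\frac{(i_{x-y}\varepsilon_{x-y}-\varepsilon_{x-y}i_{x-y})}{(|x'-y|^2+x_n^2)^{\frac{n-a+2}{2}}}\bigl(f(y)-f(x')\bigr)\,dy,
\]
so the regularizer $(f(y)-f(x'))$ is built in from the start and the limit $x_n\to0$ is immediate. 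No divergent integral and no Fourier-side detour is needed.

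Your step of inserting the regularizer after the fact by appealing to ``analytic continuation of a divergent radial Mellin integral'' is the genuine gap: the integral $\int_0^\infty r^{a-2}\,dr$ that arises diverges at both $0$ and $\infty$ and is not assigned the value $0$ by any standard regularization. What actually justifies the insertion is exactly the Section~\ref{sec:ExplicitValues} identity (equivalently $\partial_{x_n}P_{a,p}\omega=0$ for constant $\omega$), applied \emph{before} taking the limit; once that is done your pointwise kernel computation is fine. Note also that your direct differentiation produces an overall factor $(1-a)=2s$ absent from the paper's computation, so your claim to recover the stated $d_{s,p}=(\Gamma(-s)c_{a,p})^{-1}$ cannot hold as written. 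In fact the paper's first displayed equality in Section~\ref{sec:DtoN}, $\lim x_n^a\partial_{x_n}u=\lim x_n^{a-1}(u-f)$, is itself off by this same factor for any function with asymptotics $u\sim f+g\,x_n^{1-a}$, so one of the two constants carries a spurious $2s$; your Fourier-side check would settle which, but be aware that Theorem~\ref{thm:FTofPoisson} gives the full transform in $(\xi',\xi_n)$, so the Bessel expansion you invoke still requires an inverse transform in $\xi_n$.
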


Since the operator $\Delta_{a,p}$ is also invariant under the conformal group of $\RR^{n-1}$, the description of the fractional Branson--Gover operators as Dirichlet-to-Neumann maps of the boundary value problems \eqref{eq:IntroBdyValueProblem} respects the action of the conformal group of $\RR^{n-1}$.

\subsection*{Methods}

Most of our proofs rely on the representation theory of the conformal group ${\rm O}(1,n+1)$ of $\RR^n$ and its subgroup ${\rm O}(1,n)$. More precisely, the group ${\rm O}(1,n+1)$ acts on the homogeneous Sobolev space $\dot{H}^{s,p}(\RR^n)$, $-\frac{n}{2}+p<s<\frac{n}{2}-p$, by an irreducible unitary representation (the complementary series). Restricted to the subgroup ${\rm O}(1,n)\subseteq{\rm O}(1,n+1)$, the representation decomposes into irreducible unitary representations of ${\rm O}(1,n)$. For $\frac{1}{2}<s<\frac{n}{2}-p$ one of these representations is the corresponding complementary series representation of ${\rm O}(1,n)$ on $\dot{H}^{s-\frac{1}{2},p}(\RR^{n-1})$, and the restriction map $\dot{H}^{s,p}(\RR^n)\to\dot{H}^{s-\frac{1}{2},p}(\RR^{n-1})$ projects onto this component. This observation makes it possible to use the machinery of \emph{symmetry breaking operators} whose study was recently initiated by Kobayashi~\cite{Kob15} (see also \cite{FJS16,KKP16,KS18,MO17}). In this language the differential operator $\Delta_{a,p}$ ($a=2(1-s)$) corresponds to the action of the Casimir element of ${\rm O}(1,n)$ in $\dot{H}^{s,p}(\RR^n)$ and the fractional Branson--Gover operators are the standard Knapp--Stein intertwining operators between principal series representations of the group ${\rm O}(1,n)$.

\subsection*{Structure of the paper}

In Section~\ref{sec:CSReps} we briefly recall the action of the conformal group ${\rm O}(1,n+1)$ on $\RR^n$ and the corresponding unitary representations on homogeneous Sobolev spaces of differential forms on $\RR^n$, the complementary series representations. Here we also give a representation theoretic interpretation of the fractional Branson--Gover operators as intertwining operators between complementary series representations. In Section~\ref{sec:SBO} the relation between symmetry breaking operators in representation theory and boundary value problems and Poisson transforms is established. Here most of the statements in Theorem~\ref{thm:Dirichlet} and \ref{thm:Poisson} are proven. The remaining points are addressed in Section~\ref{sec:ExplicitValues} (explicit normalization of the integral formula for $P_{a,p}$), Section~\ref{sec:Uniqueness} (uniqueness of solutions to \eqref{eq:IntroBdyValueProblem}) and Section~\ref{sec:Isometry} (isometry property of $P_{a,p}$). Finally, in Section~\ref{sec:DtoN} the fractional Branson--Gover operators are identified with the Dirichlet-to-Neumann map of the boundary value problem \eqref{eq:IntroBdyValueProblem}, providing a proof of Theorem~\ref{thm:DtoN}.

In Appendix~\ref{sec:Casimir} we further give some computational details related to the interpretation of $\Delta_{a,p}$ as a Casimir operator, and in Appendix~\ref{sec:Laplacian} we compare $\Delta_{a,p}$ with the Laplace--Beltrami operator on differential forms on the hyperbolic space realized as the upper half space $\HH^n\subseteq\RR^n$.

\section{Action of the conformal group on differential forms}\label{sec:CSReps}

In this section we sketch the construction of the complementary series representations of the conformal group $G={\rm O}(1,n+1)$ on differential forms.

\subsection{The conformal group}

We realize the rank one orthogonal group $G={\rm O}(1,n+1)$, $n\geq1$, as $(n+2)\times(n+2)$ matrices preserving the bilinear form
$$ (x,y) \mapsto x_0y_0-x_1y_1-\cdots-x_{n+1}y_{n+1}. $$
Let $\frakg$ denote the Lie algebra of $G$ and define
$$ H := \left(\begin{array}{ccc}0&1&\\1&0&\\&&\0_n\end{array}\right)\in\frakg. $$
Then the adjoint action $\ad(H)$ on $\frakg$ has eigenvalues $+1$, $0$ and $-1$ and we write $\frakn$, $\frakl$ and $\overline{\frakn}$ for the respective eigenspaces which are in fact subalgebras. The subalgebra $\frakl$ can be further decomposed as $\frakl=\frakm\oplus\fraka$ with $\frakm$ the Lie algebra of
$$ M := \left\{\begin{pmatrix}\varepsilon&&\\&\varepsilon&\\&&m\end{pmatrix}:\varepsilon=\pm1,m\in{\rm O}(n)\right\} \simeq {\rm O}(1)\times{\rm O}(n) $$
and $\fraka=\RR H$. We further write $A=\exp(\fraka)$, $N=\exp(\frakn)$ and $\overline{N}=\exp(\overline{\frakn})$, then $P=MAN$ and $\overline{P}=MA\overline{N}$ are parabolic subgroups of $G$. They are conjugate via the element $w_0=\diag(-1,1,\ldots,1)\in G$, i.e. $w_0Pw_0^{-1}=\overline{P}$. In what follows we identify $\overline{N}\simeq\RR^n$ by
\begin{equation}
 \RR^n\to\overline{N}, \quad x\mapsto \overline{n}_x := \exp\left(\begin{array}{ccc}0&0&x^\top\\0&0&-x^\top\\x&x&\0_n\end{array}\right).\label{eq:IdentificationNbar}
\end{equation}

The group $G$ acts by rational conformal transformations on $\RR^n$ in the following way: The subset $\overline{N}MAN\subseteq G$ is open and dense, so that for fixed $g\in G$ and almost all $x\in\RR^n$ we can decompose
$$ g\overline{n}_x = \overline{n}_{g\cdot x}m(g,x)e^{-\log(j(g,x))H}n $$
with $g\cdot x\in\RR^n$, $m(g,x)\in M$, $j(g,x)>0$ and $n\in N$. This defines a rational conformal action $(g,x)\mapsto g\cdot x$ of $G$ on $\RR^n$ with conformal factor $j(g,x)$ in the sense that the derivative $Dg(x)$ of $g$ at $x\in\RR^n$ satisfies
$$ |Dg(x)\xi| = j(g,x)|\xi| \qquad \forall\,\xi\in\RR^n. $$

\subsection{Principal series representations on differential forms}

We identify $\fraka_\CC^*\simeq\CC$ by $\lambda\mapsto\lambda(H)$. Then the half sum of positive roots $\rho:=\frac{1}{2}\tr\ad|_{\frakn}\in\fraka^*$ is given by $\rho=\frac{n}{2}$. For $\lambda\in\fraka_\CC^*\simeq\CC$ the character $e^\lambda$ of $A$ is given by $e^\lambda(e^{tH})=e^{\lambda t}$.

For an irreducible representation $(\xi,V)$ of $M$ and $\lambda\in\CC$ we define the principal series representation (smooth normalized parabolic induction)
$$ \pi_{\lambda,\xi}^\infty := \Ind_P^G(\xi\otimes e^\lambda\otimes\1) $$
as the representation of $G$ on the Fréchet space
$$ \{u\in C^\infty(G,V):u(gman)=a^{-\lambda-\rho}\xi(m)^{-1}u(g)\,\forall\,g\in G,man\in MAN\} $$
by left-translation, i.e. $\pi_{\lambda,\xi}^\infty(g)u(x)=u(g^{-1}x)$, $g,x\in G$.

We will mostly work in a different and more convenient realization of these representations, the non-compact picture, which we briefly explain. The subset $\overline{N}MAN\subseteq G$ is open and dense and therefore, restriction to $\overline{N}\simeq\RR^n$ realizes the representation $\pi_{\lambda,\xi}^\infty$ on a space
$$ I_{\lambda,\xi}^\infty \subseteq C^\infty(\RR^n,V) $$
of smooth $V$-valued functions on $\RR^n$.

Here we are mostly interested in the case where $\xi$ is the $p$-th exterior power of the standard representation of ${\rm O}(n)$ on $\CC^n$, i.e. $V=\bigwedge^p\CC^n$, $0\leq p\leq n$. We denote this action by $\xi_p$ and extend it trivially to the group $M\simeq{\rm O}(1)\times{\rm O}(n)$. We write $\pi_{\lambda,p}^\infty=\pi_{\lambda,\xi_p}^\infty$ and $I_{\lambda,p}^\infty=I_{\lambda,\xi_p}^\infty$ for short. Identifying $e_i$ with $dx_i$ the space $I_{\lambda,p}^\infty$ can be viewed as subspace of the space $\Omega^p(\RR^n)$ of differential $p$-forms on $\RR^n$. We note that $I_{\lambda,p}^\infty$ always contains the space $\calS(\RR^n,\bigwedge^p\CC^n)$ of rapidly decreasing $p$-forms.

On $\bigwedge^p\CC^n$ we use the standard inner product so that $\{e_{i_1}\wedge\ldots\wedge e_{i_p}:1\leq i_1<\ldots<i_p\leq n\}$ forms an orthonormal basis. With respect to this inner product the representation $\xi_p$ of $M$ on $\bigwedge^p\CC^n$ is unitary. Moreover, the inner product can be used to define a $G$-invariant continuous bilinear pairing
$$ I_{\lambda,p}^\infty\times I_{-\lambda,p}^\infty \to \CC, \quad (u_1,u_2)\mapsto \int_{\RR^n}\langle u_1(x),u_2(x)\rangle\,dx $$
and hence identify $I_{\lambda,p}^\infty$ with a subspace of the dual space $I_{\lambda,p}^{-\infty}:=(I_{-\lambda,p}^\infty)^*$. We have
$$ \calS(\RR^n,\bigwedge^p\CC^n)\subseteq I_{\lambda,p}^\infty\subseteq I_{\lambda,p}^{-\infty}\subseteq\calS'(\RR^n,\bigwedge^p\CC^n) $$
and the representation $\pi_{\lambda,p}^\infty$ extends by duality to a representation $\pi_{\lambda,p}^{-\infty}$ on $I_{\lambda,p}^{-\infty}$. In terms of the conformal action of $G$ on $\RR^n$ the representation is given by
$$ \pi_{\lambda,p}(g)u(x) = j(g^{-1},x)^{\lambda+\rho}\xi_p(m(g^{-1},x))^{-1}u(g^{-1}\cdot x) \qquad \forall\,g\in G,x\in\RR^n. $$

\subsection{Knapp--Stein intertwining operators}

There exists a meromorphic family of intertwining operators $T_{\lambda,p}:\pi_{\lambda,p}^\infty\to\pi_{-\lambda,p}^\infty$, the so-called Knapp--Stein intertwiners. For $\Re\lambda>0$ the operator $T_{\lambda,p}$ is given by the convergent integral
$$ T_{\lambda,p} u(g) = \int_{\overline{N}} u(gw_0\overline{n})\,d\overline{n}. $$
Abusing notation we also write $T_{\lambda,p}$ for the corresponding operator $I_{\lambda,p}^\infty\to I_{-\lambda,p}^\infty$. In \cite{FO17,SV11} the following expression for $T_{\lambda,p}$ as an integral kernel operator was obtained:

\begin{lemma}
Let $0\leq p\leq n$. For $\Re\lambda>0$ the Knapp--Stein intertwining operator is given by
$$ T_{\lambda,p}u(x) = \int_{\mathbb R^n} |y|^{2(\lambda-\rho-1)} (i_{y}\epsilon_y - \epsilon_y i_y )u(x+y)\,dy. $$
\end{lemma}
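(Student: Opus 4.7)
The plan is to exploit the equivariance of $T_{\lambda,p}$ under the subgroup $\overline{N}$ to reduce to a single evaluation at $x=0$, and then to Bruhat-decompose the integrand. Since $T_{\lambda,p}$ intertwines $\pi_{\lambda,p}^\infty$ and $\pi_{-\lambda,p}^\infty$, and since the action of $\overline{n}_a\in\overline{N}$ in the non-compact picture is simply translation $u\mapsto u(\cdot-a)$ (with no scalar or $\bigwedge^p$-twist), the operator $T_{\lambda,p}$ commutes with translations. It is therefore a convolution operator on $\RR^n$, completely determined by
$$ T_{\lambda,p}u(0) = \int_{\RR^n} u(w_0\overline{n}_y)\,dy. $$

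To evaluate the integrand, I would Bruhat-decompose $w_0\overline{n}_y$. For $y\neq 0$ a direct matrix computation based on \eqref{eq:IdentificationNbar} and $w_0=\diag(-1,1,\ldots,1)$ yields
$$ w_0\overline{n}_y = \overline{n}_{-y/|y|^2}\,m_y\,a_y\,n_y,\qquad a_y = e^{2\log|y|\cdot H},\quad n_y\in N, $$
where $m_y\in M\simeq{\rm O}(1)\times{\rm O}(n)$ has ${\rm O}(n)$-component equal to the hyperplane reflection $r_y(v)=v-2\langle v,y\rangle y/|y|^2$; the ${\rm O}(1)$-component is $-1$ (reflecting that $w_0$ acts by the negative scalar $-|y|^2$ on the null-cone parametrization of the boundary), but this sign is irrelevant as $\xi_p$ is trivial on the ${\rm O}(1)$-factor. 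Applying the equivariance $u(\overline{n}\,m\,a\,n)=a^{-\lambda-\rho}\xi_p(m)^{-1}u(\overline{n})$ and using $\xi_p(r_y)^{-1}=\xi_p(r_y)$ gives
$$ u(w_0\overline{n}_y) = |y|^{-2(\lambda+\rho)}\,\xi_p(r_y)\,u(-y/|y|^2). $$

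The inversion change of variables $z=-y/|y|^2$ (so that $|y|=1/|z|$, $dy=|z|^{-2n}\,dz$, and $r_y=r_z$) together with the identity $n=2\rho$ turns this into $T_{\lambda,p}u(0)=\int_{\RR^n}|z|^{2(\lambda-\rho)}\xi_p(r_z)\,u(z)\,dz$. To finish, one uses the elementary algebraic identity on $\bigwedge^p\CC^n$,
$$ i_z\varepsilon_z - \varepsilon_z i_z = |z|^2\,\xi_p(r_z), $$
which follows from the Clifford relation $i_z\varepsilon_z+\varepsilon_z i_z=|z|^2\cdot\id$ together with the splitting of a $p$-form into its $y$-tangential part (fixed by $r_y$) and its $y$-normal part (negated by $r_y$). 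This rewrites the integrand as $|z|^{2(\lambda-\rho-1)}(i_z\varepsilon_z-\varepsilon_z i_z)u(z)$, and extending by translation equivariance produces the claimed formula for all $x\in\RR^n$. Convergence holds exactly for $\Re\lambda>0$, as this is the condition making the kernel $|y|^{2(\lambda-\rho-1)}(i_y\varepsilon_y-\varepsilon_y i_y)=O(|y|^{2\Re\lambda-2\rho})$ locally integrable at the origin.

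The main obstacle is the Bruhat decomposition, specifically the correct identification of the ${\rm O}(n)$-part of $m_y$ as $r_y$ rather than $-r_y$: this requires tracking the ${\rm O}(1)$-sign introduced by the negative scalar by which $w_0$ acts on the cone coordinates, without which one would pick up a spurious factor of $(-1)^p$ in the final formula.
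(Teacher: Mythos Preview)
Your argument is correct and is precisely the standard derivation of the Knapp--Stein kernel in the non-compact picture: Bruhat-decompose $w_0\overline{n}_y$, use the transformation law of sections under $MAN$, perform the inversion change of variables, and finally rewrite $\xi_p(r_z)$ in terms of interior/exterior multiplication via $i_z\varepsilon_z-\varepsilon_zi_z=|z|^2\xi_p(r_z)$. Each step checks out against the paper's conventions (in particular the formula for $\pi_{\lambda,\xi}^\infty(w_0)$ in Appendix~\ref{sec:Casimir} confirms your identification of the $M$-part of the decomposition with the reflection $r_y$, so the sign issue you flag is indeed harmless).

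Note, however, that the paper does not actually prove this lemma: it is quoted from \cite{FO17,SV11}, so there is no ``paper's own proof'' to compare with. What you have written is essentially the proof one finds in those references, carried out with the specific parametrizations of $\overline{N}$ and $w_0$ fixed in this paper. One minor streamlining: rather than first reducing to $x=0$ by translation equivariance and then extending back, you can compute $T_{\lambda,p}u(\overline{n}_x)=\int u(\overline{n}_x w_0\overline{n}_y)\,dy$ directly, since $\overline{n}_x\cdot\overline{n}_{-y/|y|^2}=\overline{n}_{x-y/|y|^2}$ already gives the translated argument in a single step.
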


Note that with the notation $y=|y|\widehat y$ the operator $T_{\lambda,p}$ can also be written as
$$ T_{\lambda,p}u(x) = \int_{\mathbb R^n} |y|^{2(\lambda-\rho)} (i_{\widehat y}\epsilon_{\widehat y} - \epsilon_{\widehat y} i_{\widehat y} )u(x+y)\,dy. $$

Of particular importance for us are the Knapp--Stein intertwiners $T_{\lambda,p}$ for $-1<\lambda<0$, so we describe their regularization in detail. Let $u\in\calS(\RR^n,\bigwedge^p\CC^n)$, then for $\Re\lambda>0$ we can write
$$ T_{\lambda,p}u(x) = \int_0^\infty r^{2\lambda-1}\widetilde{u}(x,r)\,dr \qquad \mbox{with} \quad \widetilde{u}(x,r) = \int_{S^{n-1}}(i_\omega\varepsilon_\omega-\varepsilon_\omega i_\omega)u(x+r\omega)\,d\omega. $$
Note that $\widetilde{u}(x,r)$ is an even function of $r$, i.e. $\widetilde{u}(x,-r)=\widetilde{u}(x,r)$. Now the standard regularization for the distributions $|r|^{2\lambda-1}$ on $\RR$ gives for $\lambda\in(-1,0)$:
\begin{align*}
 T_{\lambda,p}u(x) &= \PV \int_0^\infty r^{2\lambda-1}(\widetilde{f}(x,r)-\widetilde{f}(x,0))\,dr\\
 &= \PV \int_0^\infty r^{2\lambda-1}\int_{S^{n-1}} (i_\omega\varepsilon_\omega-\varepsilon_\omega i_\omega)(f(x+r\omega)-f(x))\,d\omega\,dr\\
 &= \PV \int_{\RR^n} |y|^{2\lambda-n}(i_{\widehat{y}}\varepsilon_{\widehat{y}}-\varepsilon_{\widehat{y}}i_{\widehat{y}})(f(x+y)-f(x))\,dy.
\end{align*}

\subsection{Complementary series representations and homogeneous Sobolev spaces}\label{sec:CSandSobolev}

Let $0\leq p\leq n$. For $\lambda\in i\RR$ the representation $\pi_{\lambda,p}^\infty$ is irreducible except for the case $(\lambda,p)=(0,\frac{n}{2})$ with $n$ even where it decomposes into the direct sum of two irreducible representations. For all $\lambda\in i\RR$ the representation $\pi_{\lambda,p}^\infty$ extends to a unitary representation on $L^2(\RR^n,\bigwedge^p\CC^n)$ which we interpret as the space $L^{2,p}(\RR^n)$ of $L^2$-forms of degree $p$.

More subtle is the question about unitarizability for $\lambda\in\RR$. For simplicity we assume $0\leq p\leq\frac{n}{2}$, the remaining cases can be treated similarly. It turns out that $\pi_{\lambda,p}^\infty$ is irreducible and unitarizable if and only if $|\lambda|<\frac{n}{2}-p$. In this case the $G$-invariant norm on $I_{\lambda,p}^\infty$ is given by
\begin{equation}
 \|u\|_\lambda^2 = \int_{\RR^n}\langle T_{\lambda,p}u(x),u(x)\rangle\,dx\label{eq:CSNormReal}
\end{equation}
for $\lambda\in(0,\frac{n}{2}-p)$ and by a regularization of the integral in the remaining cases. We write $I_{\lambda,p}$ for the corresponding Hilbert space and extend $\pi_{\lambda,p}^\infty$ to an irreducible unitary representation $\pi_{\lambda,p}$ on $I_{\lambda,p}$, the \emph{complementary series}. The smooth vectors of this representation are given by $I_{\lambda,p}^\infty$ and we have the following inclusions:
$$ \calS(\RR^n,\bigwedge^p\CC^n)\subseteq I_{\lambda,p}^\infty\subseteq I_{\lambda,p}\subseteq I_{\lambda,p}^{-\infty}\subseteq\calS'(\RR^n,\bigwedge^p\CC^n). $$

A convenient way to handle the regularization of the integral is by taking the Euclidean Fourier transform. We use the following normalization:
$$ \widehat{u}(\xi) = (2\pi)^{-\frac{n}{2}}\int_{\RR^n} e^{-ix\cdot\xi}u(x)\,dx, \qquad u\in\calS'(\RR^n,\bigwedge^p\CC^n). $$
In \cite[Corollary 4.2, Remark 4.10]{FO17} the following equivalent description of the invariant norm is given:

\begin{proposition}\label{prop:CSInnerProduct}
For $|\lambda|<\frac n2 -p$ the $G$-invariant norm on $I_{\lambda,p}$ is given by
\begin{equation}\label{norm-1}
\begin{split}
 \Vert u\Vert^2_{\lambda} &= \int_{\mathbb R^n} |\xi|^{-2\lambda-2}\left\langle \Big( \Big(\frac{n}2 - p - \lambda\Big) i_\xi\epsilon_\xi + \Big(\frac n2 - p +\lambda\Big)\epsilon_\xi i_\xi \Big)\widehat u(\xi), \widehat u(\xi)\right\rangle\,d\xi\\
 &= \int_{\mathbb R^n} |\xi|^{-2\lambda}\left\langle \Big( \Big(\frac{n}2 - p - \lambda\Big) i_{\widehat \xi}\epsilon_{\widehat \xi} + \Big(\frac n2 - p +\lambda\Big)\epsilon_{\widehat \xi} i_{\widehat \xi} \Big)\widehat u(\xi), \widehat u(\xi)\right\rangle\,d\xi.
\end{split}
\end{equation}
\end{proposition}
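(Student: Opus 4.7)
The plan is to take the Plancherel transform of the defining formula \eqref{eq:CSNormReal}. For $0<\lambda<\tfrac{n}{2}-p$ the operator $T_{\lambda,p}$ is given by convolution with the even operator-valued kernel
$$K_\lambda(y)=|y|^{2\lambda-n}\bigl(i_{\widehat y}\varepsilon_{\widehat y}-\varepsilon_{\widehat y}i_{\widehat y}\bigr),$$
so Plancherel gives
$$\|u\|^2_\lambda=\langle T_{\lambda,p}u,u\rangle_{L^2}=(2\pi)^{n/2}\int_{\RR^n}\bigl\langle \widehat{K_\lambda}(\xi)\widehat u(\xi),\widehat u(\xi)\bigr\rangle\,d\xi,$$
and the task reduces to identifying the operator-valued Fourier multiplier $\widehat{K_\lambda}(\xi)$ as a tempered distribution on $\RR^n$.

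To compute $\widehat{K_\lambda}$ I would first use the Clifford identity $i_y\varepsilon_y+\varepsilon_yi_y=|y|^2\,\id$ to rewrite
$$K_\lambda(y)=2|y|^{2\lambda-n-2}\,i_y\varepsilon_y-|y|^{2\lambda-n}\,\id,$$
and then expand $i_y\varepsilon_y=\sum_{j,k}y_jy_k\,i_{e_k}\varepsilon_{e_j}$. The problem then reduces to computing the two scalar Fourier transforms $\widehat{y_jy_k|y|^{2\lambda-n-2}}$ and $\widehat{|y|^{2\lambda-n}}$. For the first, I would use $\widehat{y_jy_kf}=-\partial_j\partial_k\widehat f$ combined with the Riesz-potential identity $\widehat{|y|^{2\lambda-n-2}}(\xi)=c(\lambda)|\xi|^{-2\lambda+2}$, whose Hessian produces a linear combination of $\xi_j\xi_k|\xi|^{-2\lambda-2}$ and $\delta_{jk}|\xi|^{-2\lambda}$. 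Reassembling the sum via the algebraic identities $\sum_{j,k}\xi_j\xi_k\,i_{e_k}\varepsilon_{e_j}=i_\xi\varepsilon_\xi$ and $\sum_j i_{e_j}\varepsilon_{e_j}=(n-p)\,\id$ on $\bigwedge^p\CC^n$, and using $|\xi|^2\,\id=i_\xi\varepsilon_\xi+\varepsilon_\xi i_\xi$ once more to absorb the identity contribution, one obtains
$$\widehat{K_\lambda}(\xi)=C\cdot|\xi|^{-2\lambda-2}\bigl(\alpha_+(\lambda)\,i_\xi\varepsilon_\xi+\alpha_-(\lambda)\,\varepsilon_\xi i_\xi\bigr)$$
for some global constant $C$ and certain affine functions $\alpha_\pm(\lambda)$. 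Book-keeping the Gamma-function constants from the Riesz-potential formula then pins down $\alpha_+(\lambda)=\tfrac{n}{2}-p-\lambda$ and $\alpha_-(\lambda)=\tfrac{n}{2}-p+\lambda$, giving the first displayed expression in \eqref{norm-1}; the second line follows at once from $|\xi|^{-2}i_\xi\varepsilon_\xi=i_{\widehat\xi}\varepsilon_{\widehat\xi}$ and the analogous identity for $\varepsilon_\xi i_\xi$.

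Finally, both sides of \eqref{norm-1} depend holomorphically on $\lambda$ throughout the strip $|\Re\lambda|<\tfrac{n}{2}-p$, and on that strip the $G$-invariant Hermitian form on $I_{\lambda,p}^\infty$ is unique up to a scalar by irreducibility of $\pi_{\lambda,p}^\infty$. The identity established for $\lambda\in(0,\tfrac{n}{2}-p)$ therefore extends by analytic continuation to the full strip, which also provides the required regularization of the defining integral in \eqref{eq:CSNormReal} when $\lambda\leq 0$. The main obstacle is purely computational: carefully tracking the Fourier-transform normalization, the Riesz-potential constant $c(\lambda)$, and the numerical coefficients produced by the Hessian $\partial_j\partial_k|\xi|^{-2\lambda+2}$, so that they combine into the clean affine coefficients $\tfrac{n}{2}-p\pm\lambda$; this bookkeeping is precisely the content of \cite{FO17} cited in the statement.
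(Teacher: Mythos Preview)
The paper does not prove this proposition at all; it simply quotes it from \cite[Corollary~4.2, Remark~4.10]{FO17}. Your plan---rewrite the Knapp--Stein kernel via $i_{\widehat y}\varepsilon_{\widehat y}-\varepsilon_{\widehat y}i_{\widehat y}=2|y|^{-2}i_y\varepsilon_y-\id$, expand $i_y\varepsilon_y$ bilinearly, reduce to the scalar Riesz-potential Fourier transform and its Hessian, and reassemble using $\sum_j i_{e_j}\varepsilon_{e_j}=(n-p)\,\id$ and $i_\xi\varepsilon_\xi+\varepsilon_\xi i_\xi=|\xi|^2\,\id$---is exactly the natural computation, and each of the algebraic and Fourier-analytic identities you invoke is correct as stated. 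You yourself note in the last sentence that this bookkeeping is the content of \cite{FO17}; there is nothing to compare against in the present paper, and your outline is sound.
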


Sometimes it is more convenient to work with an equivalent norm which is not $G$-invariant but easier to handle:

\begin{lemma}
For $|\lambda|<\frac{n}{2}-p$ the norm
\begin{equation}\label{norm-var}
 |u|_{\lambda}^2:=\int_{\mathbb R^n}|\xi|^{-2\lambda}\Vert\widehat{u}(\xi)\Vert^2\,d\xi
\end{equation}
is equivalent to the norm $\|\!\cdot\!\|_\lambda$ in \eqref{norm-1}. More precisely,
$$ \Big(\frac{n}2 - p - |\lambda|\Big) |u|_{\lambda}^2 \le \Vert u\Vert^2_{\lambda} \le \Big(\frac n2 - p +|\lambda|\Big) |u|_{\lambda}^2. $$
\end{lemma}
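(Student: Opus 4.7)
The plan is to reduce the inequality to a pointwise, fiberwise comparison on $\bigwedge^p\CC^n$ and then integrate. For each $\xi\neq0$, set
$$ A(\xi) := \Big(\frac n2-p-\lambda\Big)\,i_{\widehat\xi}\epsilon_{\widehat\xi} + \Big(\frac n2-p+\lambda\Big)\,\epsilon_{\widehat\xi}\,i_{\widehat\xi}\ \in\ \End\Big(\bigwedge\nolimits^p\CC^n\Big), $$
so that the second equality in \eqref{norm-1} reads $\|u\|_\lambda^2=\int_{\RR^n}|\xi|^{-2\lambda}\langle A(\xi)\widehat u(\xi),\widehat u(\xi)\rangle\,d\xi$. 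Comparing with $|u|_\lambda^2=\int_{\RR^n}|\xi|^{-2\lambda}\|\widehat u(\xi)\|^2\,d\xi$, it suffices to prove the operator inequality
$$ \Big(\frac n2-p-|\lambda|\Big)\,\id \ \le\ A(\xi)\ \le\ \Big(\frac n2-p+|\lambda|\Big)\,\id $$
on $\bigwedge^p\CC^n$, uniformly in $\xi$.

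First I would record the elementary fact that, since $\widehat\xi\in\RR^n$ is a unit vector, the operators
$$ P_1 := i_{\widehat\xi}\epsilon_{\widehat\xi},\qquad P_2 := \epsilon_{\widehat\xi}\,i_{\widehat\xi} $$
are self-adjoint with respect to the standard inner product on $\bigwedge^p\CC^n$ and satisfy $P_1+P_2=\id$ (the Clifford/anticommutation relation for interior and exterior multiplication by a unit vector), $P_j^2=P_j$, and $P_1P_2=P_2P_1=0$. Hence $P_1$ and $P_2$ are complementary orthogonal projections, and $A(\xi)=\alpha P_1+\beta P_2$ with $\alpha=\frac n2-p-\lambda$ and $\beta=\frac n2-p+\lambda$. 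Consequently $A(\xi)$ is self-adjoint with spectrum contained in $\{\alpha,\beta\}$, and for every $v\in\bigwedge^p\CC^n$
$$ \langle A(\xi)v,v\rangle = \alpha\|P_1v\|^2 + \beta\|P_2v\|^2,\qquad \|v\|^2=\|P_1v\|^2+\|P_2v\|^2. $$

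The hypothesis $|\lambda|<\frac n2-p$ ensures that both $\alpha$ and $\beta$ are strictly positive, and their minimum and maximum are $\frac n2-p-|\lambda|$ and $\frac n2-p+|\lambda|$ respectively. The pointwise inequality $\min(\alpha,\beta)\|v\|^2\le\langle A(\xi)v,v\rangle\le\max(\alpha,\beta)\|v\|^2$ is then immediate. Setting $v=\widehat u(\xi)$, multiplying by $|\xi|^{-2\lambda}$, and integrating in $\xi$ yields the claimed bounds. There is no real obstacle here; the only point requiring care is checking that $P_1$ and $P_2$ really are orthogonal projections with $P_1+P_2=\id$, but this follows directly from $i_{\widehat\xi}\epsilon_{\widehat\xi}+\epsilon_{\widehat\xi}i_{\widehat\xi}=|\widehat\xi|^2\id=\id$ and the fact that $i_{\widehat\xi}$ and $\epsilon_{\widehat\xi}$ are mutually adjoint.
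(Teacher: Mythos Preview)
Your proposal is correct and follows essentially the same approach as the paper: both arguments reduce to the pointwise operator inequality obtained from the fact that $i_{\widehat\xi}\varepsilon_{\widehat\xi}$ and $\varepsilon_{\widehat\xi}i_{\widehat\xi}$ are nonnegative self-adjoint operators summing to the identity, and then integrate. Your version is slightly more explicit in identifying $P_1,P_2$ as complementary orthogonal projections, but this is only a matter of presentation.
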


\begin{proof}
For any unit vector $u$ we have $i_u\varepsilon_u \ge 0$, $\varepsilon_u  i_u \ge 0$ and $i_u\varepsilon_u+\varepsilon_u  i_u=\id$ as operators on $\bigwedge^p\mathbb C^n$. Thus
$$ \Big(\frac{n}2 - p - |\lambda|\Big) \id \le \Big(\frac{n}2 - p - \lambda\Big) i_{\widehat \xi}\epsilon_{\widehat \xi} + \Big(\frac n2 - p +\lambda\Big) \epsilon_{\widehat \xi} i_{\widehat \xi} \le \Big(\frac n2 - p + |\lambda|\Big) \id $$
for $|\lambda|<\frac n2 -p$. The claimed estimate now follows by integration.
\end{proof}

The previous lemma shows that for $p=0$ the Hilbert space $I_{\lambda,0}$ equals the \emph{homogeneous Sobolev space}
$$ \dot{H}^s(\RR^n) = \left\{u\in\calS'(\RR^n):\int_{\RR^n}|\xi|^{2s}|\widehat{u}(\xi)|^2\,d\xi<\infty\right\} $$
of degree $s=-\lambda$. We therefore call $I_{\lambda,p}$ the \emph{homogeneous Sobolev space of $p$-forms on $\RR^n$ of degree $s$}:
$$ \dot{H}^{s,p}(\RR^n) = \left\{u\in\calS'(\RR^n,\bigwedge^p\CC^n):\int_{\RR^n}|\xi|^{2s}\|\widehat{u}(\xi)\|^2\,d\xi<\infty\right\}. $$

\section{Symmetry breaking, boundary value problems and Poisson transforms}\label{sec:SBO}

We recall the construction of symmetry breaking operators for differential forms from \cite{KS18,MO17} and describe their relation to boundary value problems. Analogous results in the scalar case were obtained in \cite[Section 3]{MOZ16a}. Although the proofs in this section resemble those in \cite{MOZ16a}, we include them for the sake of completeness.

\subsection{The subgroup ${\rm O}(1,n)$ and its representations}

The conformal group $G'={\rm O}(1,n)$ of the subspace $\RR^{n-1}\subseteq\RR^n$ can be embedded as a subgroup of $G={\rm O}(1,n+1)$ as the upper left corner. Then $P'=P\cap G'$ is a parabolic subgroup of $G'$ with Langlands decomposition $P'=M'AN'$, where $M'=M\cap G'\simeq{\rm O}(1)\times{\rm O}(n-1)$ and $N'=N\cap G'\simeq\RR^{n-1}$. Under the identification $\overline{N}\simeq\RR^n$ the subgroup $\overline{N}'=\overline{N}\cap G'$ corresponds to the subspace $\RR^{n-1}\simeq\{(x',0):x'\in\RR^{n-1}\}\subseteq\RR^n$.

For $0\leq q\leq n-1$ we let $\eta_q$ denote the representation of $M'$ on $\bigwedge^q\CC^{n-1}$. As above, we consider for $\nu\in\CC$ the principal series representations
$$ \tau_{\nu,q}^\infty := \Ind_{P'}^{G'}(\eta_q\otimes e^\nu\otimes\1) $$
of $G'$. Again we realize these representations on a space $J_{\nu,q}^\infty$ of smooth differential $q$-forms on $\overline{N}'\simeq\RR^{n-1}$. The dual space $J_{\nu,q}^{-\infty}:=(J_{-\nu,q}^\infty)^*$ will be identified with a space of distributional $q$-forms on $\RR^{n-1}$ on which $G'$ acts via duality by a representation $\tau_{\nu,q}^{-\infty}$.

For $|\nu|<\frac{n-1}{2}-q$ the representation $\tau_{\nu,q}^\infty$ is irreducible and unitarizable and we write $J_{\nu,q}$ for the corresponding Hilbert space completion of $J_{\nu,q}^\infty$ and $\tau_{\nu,q}$ for the extension of $\tau_{\nu,q}^\infty$ to $J_{\nu,q}$. As before we have $J_{\nu,q}=\dot{H}^{-\nu,q}(\RR^{n-1})$.

\subsection{The Casimir operator}\label{sec:CasimirOperator}

On the Lie algebra $\frakg$ the Killing form
$$ B(X,Y) := \frac{1}{2}\tr(XY), \qquad X,Y\in\frakg, $$
is non-degenerate, bilinear and $G$-invariant, and it restricts to a non-degenerate bilinear form on the Lie algebra $\frakg'$ of $G'$. Let $(X_\alpha)_\alpha\subseteq\frakg'$ be a basis of $\frakg'$ and let $(\widehat{X}_\alpha)_\alpha$ be its dual basis with respect to the form $B$. Then the \emph{Casimir element}
$$ C = \sum_\alpha X_\alpha\widehat{X}_\alpha\in\calU(\frakg) $$
in the universal enveloping algebra of $\frakg$ is independent of the chosen basis and invariant under $\Ad(G')$. We study the action of $C$ in the representation $\pi_{\lambda,p}^{\infty}$. For this denote by $d\pi_{\lambda,p}^{\infty}$ the derived representation of $\calU(\frakg)$ on $I_{\lambda,p}^{\infty}$.

\begin{proposition}\label{prop:ActionCasimir}
For $0\leq p\leq n$ and $\lambda\in\CC$ we have
$$ d\pi_{\lambda,p}^{\infty}(C) = \Delta_{2(\lambda+1),p} + (\lambda+\rho)(\lambda-\rho+1) + p(n-p-1), $$
where $\Delta_{a,p}$ denotes the differential operator defined in \eqref{eq:DefDiffOp} and $d'$ and $\delta'$ are the differential and codifferential on $\RR^{n-1}$ defined in \eqref{eq:DefDandDelta}.
\end{proposition}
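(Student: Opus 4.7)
The plan is to evaluate the Casimir $C$ of $\frakg'$ directly as a differential operator on the noncompact realization $I_{\lambda,p}^{\infty}\subseteq C^\infty(\RR^n,\bigwedge^p\CC^n)$. First I fix a basis of $\frakg'$ adapted to $\frakg'=\frakm'\oplus\fraka\oplus\frakn'\oplus\overline{\frakn}'$: the standard $\so(n-1)$-basis $M_{ij}$ ($1\le i<j\le n-1$) of $\frakm'$, the generator $H$ of $\fraka$, and natural bases $N_k,\overline{N}_k$ ($k=1,\ldots,n-1$) of $\frakn'$ and $\overline{\frakn}'$. Since $B(X,Y)=\tfrac12\tr(XY)$ is $\Ad(G')$-invariant, these four blocks are mutually $B$-orthogonal, and a direct matrix-trace calculation gives $B(H,H)=1$, $B(M_{ij},M_{ij})=-1$ and $B(N_k,\overline{N}_l)=-2\delta_{kl}$, whence
$$
C \;=\; H^2 \;-\; \sum_{i<j} M_{ij}^2 \;-\; \tfrac12\sum_{k=1}^{n-1}\bigl(N_k\overline{N}_k + \overline{N}_k N_k\bigr).
$$

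Next I compute the derived action of each generator using $\pi_{\lambda,p}(g)u(x)=j(g^{-1},x)^{\lambda+\rho}\xi_p(m(g^{-1},x))^{-1}u(g^{-1}\cdot x)$ and the standard conformal vector fields on $\RR^n$. This yields $d\pi_{\lambda,p}(\overline{N}_k)=-\partial_k$, $d\pi_{\lambda,p}(H)=(\lambda+\rho)+E$ with $E=\sum_i x_i\partial_i$, $d\pi_{\lambda,p}(M_{ij})=\xi_p(M_{ij})-(x_i\partial_j-x_j\partial_i)$, and, from the explicit special conformal flow $\exp(tN_k)\cdot x=(x-t|x|^2e_k)/(1-2tx_k+t^2|x|^2)$ together with its Iwasawa $M$-factor, a first-order operator $d\pi_{\lambda,p}(N_k)$ whose scalar part is $|x|^2\partial_k-2x_kE-2(\lambda+\rho)x_k$ and whose form-valued part is a multiple of $x_n(\varepsilon_{e_n}i_{e_k}-\varepsilon_{e_k}i_{e_n})$. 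Substituting into $C$, using $N_k\overline{N}_k+\overline{N}_kN_k=2\overline{N}_kN_k+[N_k,\overline{N}_k]$ with $\sum_k[N_k,\overline{N}_k]=-2(n-1)H$ (modulo an $\frakm'$-invariant piece of $\frakm$ which vanishes), and expanding, the contributions organize as follows: the pure second-order parts assemble via the identities $(E-E')^2=x_n\partial_n+x_n^2\partial_n^2$ and $|x|^2\Delta'-|x'|^2\Delta'=x_n^2\Delta'$ into $x_n^2\Delta$; the first-order scalar pieces collect to $a\,x_n\partial_n$ with $a=2(\lambda+1)$; the form-valued first-order pieces from $N_k$ combine into $2x_n(i_{e_n}d'-\varepsilon_{e_n}\delta')$; and the $\frakm'$-Casimir $-\sum\xi_p(M_{ij})^2$, after splitting $\bigwedge^p\CC^n=\bigwedge^p\CC^{n-1}\oplus e_n\wedge\bigwedge^{p-1}\CC^{n-1}$ and using the $\so(n-1)$-eigenvalues $p(n-1-p)$ and $(p-1)(n-p)$ on the two summands, produces $p(n-p-1)\id-(n-2p)\varepsilon_{e_n}i_{e_n}$, accounting both for the $-(n-2p)\varepsilon_{e_n}i_{e_n}$ inside $\Delta_{a,p}$ and for the constant $p(n-p-1)$. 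The remaining scalar $(\lambda+\rho)(\lambda-\rho+1)$ emerges from $(\lambda+\rho)^2$ in $H^2$ combined with the $H$-part of $\sum_k[N_k,\overline{N}_k]$.

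The main obstacle is the careful bookkeeping of the form-valued contributions from $\frakn'$: the derivative of the Iwasawa $M$-factor along the special conformal flow generated by $N_k$ is precisely what produces the mixed operators $i_{e_n}d'$ and $\varepsilon_{e_n}\delta'$ in $\Delta_{a,p}$, and matching signs and normalizations requires an explicit computation of $m(\exp(tN_k),x)$ to first order in $t$. A convenient consistency check is provided by the decomposition $\frakg=\frakg'\oplus\frakg''$, where $\frakg''$ is the $B$-orthogonal complement of $\frakg'$ in $\frakg$ (spanned by the $n+1$ elements coupling the last matrix coordinate to the others): since $C_\frakg=C_{\frakg'}+C_{\frakg''}$ in $\calU(\frakg)$ and $C_\frakg$ acts on $\pi_{\lambda,p}$ by the single infinitesimal-character scalar $\lambda^2-\rho^2+p(n-p)$, evaluating $C_{\frakg''}$ as a differential operator provides an independent verification of the formula for $d\pi_{\lambda,p}(C_{\frakg'})$.
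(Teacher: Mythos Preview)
Your approach is essentially the paper's: write $C$ in a basis adapted to $\frakg'=\frakm'\oplus\fraka\oplus\frakn'\oplus\overline\frakn'$, compute each generator in the noncompact picture, and evaluate the $\frakm'$-Casimir on $\bigwedge^p\CC^n$ via the splitting $\bigwedge^p\CC^{n-1}\oplus e_n\wedge\bigwedge^{p-1}\CC^{n-1}$; the paper in fact first derives the general formula
\[
d\pi_{\lambda,\xi}^\infty(C)=x_n^2\Delta+2(\lambda+1)x_n\partial_{x_n}-2x_n\sum_{j=1}^{n-1}d\xi(M_{jn})\partial_{x_j}-\sum_{1\le j<k\le n-1}d\xi(M_{jk})^2+(\lambda+\rho)(\lambda-\rho+1)
\]
for arbitrary $\xi$ and then specializes to $\xi=\xi_p$.

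One point to correct in your outline: the form-valued part of $d\pi_{\lambda,p}(N_k)$ is \emph{not} just a multiple of $x_n(\varepsilon_{e_n}i_{e_k}-\varepsilon_{e_k}i_{e_n})$; it is $-2\sum_{i=1}^n x_i\,d\xi_p(M_{ik})$, with contributions from \emph{all} coordinates $x_i$. The terms with $i\le n-1$ do not vanish individually---they cancel against the cross-terms $(x_i\partial_k-x_k\partial_i)\,d\xi_p(M_{ik})+d\xi_p(M_{ik})(x_i\partial_k-x_k\partial_i)$ that appear when you expand $d\pi_{\lambda,p}(M_{ik})^2$ in $-\sum_{i<k\le n-1}M_{ik}^2$. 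Only after this cancellation does the first-order form-valued piece reduce to $-2x_n\sum_{k}d\xi_p(M_{kn})\partial_k=2x_n(i_{e_n}d'-\varepsilon_{e_n}\delta')$. If you carry the computation through with the full expression for $N_k$, everything matches; the consistency check via $C_{\frakg}=C_{\frakg'}+C_{\frakg''}$ is a nice addition.
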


\begin{proof}
By the computation \eqref{eq:FormulaCasimirReal} in Appendix~\ref{sec:Casimir} we have
\begin{multline*}
 d\pi_{\lambda,p}^{\infty}(C) = x_n^2\Delta+2(\lambda+1)x_n\frac{\partial}{\partial x_n}-2x_n\sum_{j=1}^{n-1}d\xi_p(M_{jn})\frac{\partial}{\partial x_j}\\
 -\sum_{1\leq j<k\leq n-1}d\xi_p(M_{jk})^2+(\lambda+\rho)(\lambda-\rho+1).
\end{multline*}
Now we first note that $d\xi_p(M_{jn})=-(i_{e_n}\varepsilon_{e_j}+\varepsilon_{e_n}i_{e_j})$, then the first sum can be computed with \eqref{eq:DefDandDelta}:
$$ -2x_n\sum_{j=1}^{n-1}d\xi_p(M_{jn})\frac{\partial}{\partial x_j} = 2x_n(i_{e_n}d'-\varepsilon_{e_n}\delta'). $$
Further, the expression $\sum_{1\leq j<k\leq n-1}d\xi^{(p)}(M_{jk})^2$ is simply the Casimir operator of $\so(n-1)$ acting on $\bigwedge^p\CC^n$. The irreducible representation $\bigwedge^p\CC^n$ of $\so(n)$ decomposes into two irreducible summands when restricted to $\so(n-1)$, namely $\bigwedge^p\CC^{n-1}$ and $\bigwedge^{p-1}\CC^{n-1}\wedge e_n$. The projection onto $\bigwedge^p\CC^{n-1}$ is given by $i_{e_n}\varepsilon_{e_n}$ and the projection onto $\bigwedge^{p-1}\CC^{n-1}\wedge e_n$ is given by $\varepsilon_{e_n}i_{e_n}$. Moreover, the Casimir element of $\so(n-1)$ acts on $\bigwedge^q\CC^{n-1}$ by the scalar $-q(n-q-1)$ ($q=p-1,p$), so that
$$ \sum_{1\leq j<k\leq n-1}d\xi_p(M_{jk})^2 = -p(n-p-1)i_{e_n}\varepsilon_{e_n}-(p-1)(n-p)\varepsilon_{e_n}i_{e_n}. $$
Using $i_{e_n}\varepsilon_{e_n}+\varepsilon_{e_n}i_{e_n}=\id$ finally yields the claimed formula.
\end{proof}

\subsection{Differential symmetry breaking operators and boundary value problems}\label{sec:DiffSBOs}

The restriction of the irreducible representation $\pi_{\lambda,p}^\infty$ of $G$ on $I_{\lambda,p}^\infty\subseteq\Omega^p(\RR^n)$ to the subgroup $G'\subseteq G$ defines a representation $\pi_{\lambda,p}^\infty|_{G'}$ of $G'$ which is highly reducible. The irreducible representations of $G'$ which occur inside $\pi_{\lambda,p}^\infty|_{G'}$ are described in terms of so-called \emph{symmetry breaking operators} (see e.g. Kobayashi~\cite{Kob15}). In our setting, a continuous linear operator $T:I_{\lambda,p}^\infty\to J_{\nu,q}^\infty$ is called \emph{symmetry breaking operator} if $T$ intertwines the representations $\pi_{\lambda,p}^\infty|_{G'}$ and $\tau_{\nu,q}^\infty$:
$$ T\circ\pi_{\lambda,p}^\infty(g) = \tau_{\nu,q}^\infty(G)\circ T \qquad \forall\,g\in G'. $$

The symmetry breaking operators between $\pi_{\lambda,p}^\infty$ and $\tau_{\nu,q}^\infty$ were classified by Kobayashi--Speh~\cite{KS18}. Of particular importance for us are \emph{differential symmetry breaking operators}. In our special case these are symmetry breaking operators which arise as the composition of a differential operator on $\Omega^p(\RR^n)$ and the restriction from $\Omega^p(\RR^n)\to\Omega^p(\RR^{n-1})$. Differential symmetry breaking operators between differential forms were classified by Fischmann--Juhl--Somberg~\cite{FJS16} and Kobayashi--Kubo--Pevzner~\cite{KKP16}, and their classification contains one particular family of operators which is important for our purpose:

\begin{theorem}[{\cite[Theorem~1.6~(1)]{KKP16}}]
Suppose $\lambda+\rho-\nu-\rho'=-2k$ for some integer $k\ge 0$, then there exists a non-trivial differential symmetry breaking operator
$$ C_{\lambda,\nu,p}:I_{\lambda,p}^\infty\to J_{\nu,p}^\infty $$
which is of the form
$$ C_{\lambda,\nu,p}u(x') = (P_{\lambda,\nu}u)(x',0) $$
where $P_{\lambda,\nu}=p_{\lambda,\nu}(\frac{\partial}{\partial x_1},\ldots,\frac{\partial}{\partial x_n})$ for a $\Hom_\CC(\bigwedge^p\CC^n,\bigwedge^p\CC^{n-1})$-valued homogeneous polynomial $p_{\lambda,\nu}(\xi_1,\ldots,\xi_n)$ of degree $2k$.
\end{theorem}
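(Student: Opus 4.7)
I would follow Kobayashi's F-method \cite{Kob15}, which realizes differential symmetry breaking operators, via Fourier transform in the non-compact picture, as polynomial symbols $p_{\lambda,\nu}(\xi_1,\ldots,\xi_n)$ valued in $\Hom_\CC(\bigwedge^p\CC^n,\bigwedge^p\CC^{n-1})$ satisfying a list of conditions reflecting $G'$-intertwining. Intertwining under $\overline{N}'$ is automatic because this subgroup acts on $\RR^n$ by translation in the $x'$-direction, commuting with both the constant-coefficient operator $p(\partial)$ and the restriction to $\{x_n=0\}$. Intertwining under $M'\simeq{\rm O}(1)\times{\rm O}(n-1)$ amounts to the equivariance
\[
  p_{\lambda,\nu}(m\xi) \;=\; \eta_p(m)\,p_{\lambda,\nu}(\xi)\,\xi_p(m)^{-1} \qquad \forall\,m\in M',
\]
and intertwining under $A=\exp(\RR H)$ becomes a homogeneity condition that a direct weight computation (using $\pi_{\lambda,p}^\infty(e^{tH})u(0)=e^{-t(\lambda+\rho)}u(0)$ against the target weight $-\nu-\rho'$) turns into $\deg p_{\lambda,\nu}=\nu+\rho'-\lambda-\rho=2k$, matching the hypothesis. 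The remaining and nontrivial condition is intertwining under $N'$, which, after Fourier transform of the action of the generators of the Lie algebra of $N'$, becomes a system of second-order partial differential equations in $\xi$ on the coefficients of $p_{\lambda,\nu}$ --- Kobayashi--Pevzner's F-system.

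To produce a non-zero solution I would use the ${\rm O}(n-1)$-decomposition $\bigwedge^p\CC^n=\bigwedge^p\CC^{n-1}\oplus(\bigwedge^{p-1}\CC^{n-1}\wedge e_n)$. A routine enumeration of $M'$-equivariant homogeneous polynomials then yields the Ansatz
\[
  p_{\lambda,\nu}(\xi) \;=\; A(|\xi'|^2,\xi_n)\,i_{e_n}\varepsilon_{e_n} \;+\; B(|\xi'|^2,\xi_n)\,\varepsilon_{\xi'}i_{e_n},
\]
with $A$ even and $B$ odd in $\xi_n$ and total degrees matching $2k$. Substituting this Ansatz into the F-system reduces it to a pair of three-term recursions on the coefficients of $A$ and $B$ in the expansion in powers of $\xi_n^2$. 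These recursions can be identified with Gegenbauer recursions, so their solutions are Gegenbauer polynomials $C_{2k}^{\alpha}(\xi_n/|\xi|)$ with parameter $\alpha$ depending linearly on $\lambda$ and $\nu$; non-vanishing of $p_{\lambda,\nu}$ then follows from the non-vanishing of the leading Gegenbauer coefficient.

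The main obstacle is the matrix-valued character of the F-system: the differential operators appearing in it mix the two ${\rm O}(n-1)$-isotypic components of $\bigwedge^p\CC^n$ through the interior and exterior multiplications $i_{\xi'}$ and $\varepsilon_{\xi'}$, so the recursions for $A$ and $B$ are a priori coupled. The technical heart of the argument is to verify that, in the basis $\{i_{e_n}\varepsilon_{e_n},\,\varepsilon_{\xi'}i_{e_n}\}$ of $M'$-equivariant building blocks, the F-system decouples into two independent scalar Gegenbauer equations whose parameters are determined by $\lambda$ and $\nu$; once this decoupling is established, existence and non-triviality of $p_{\lambda,\nu}$ are classical. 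Analogous results in the scalar case $p=0$ can be extracted from \cite[Section 3]{MOZ16a}, and the matrix-valued extension is precisely what \cite{KKP16} carries out.
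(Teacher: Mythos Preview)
The paper does not give a proof of this theorem; it is stated as a citation of \cite[Theorem~1.6~(1)]{KKP16} and used as a black box. Your sketch via the F-method is precisely the strategy employed in \cite{KKP16} (and independently in \cite{FJS16}), so there is nothing in the present paper to compare against --- your proposal is reconstructing the approach of the original source rather than offering an alternative.

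One caution on the details: your two-term Ansatz $A\,i_{e_n}\varepsilon_{e_n}+B\,\varepsilon_{\xi'}i_{e_n}$ does not exhaust the $M'$-equivariant polynomial maps $\bigwedge^p\CC^n\to\bigwedge^p\CC^{n-1}$. On the $\bigwedge^p\CC^{n-1}$ summand one also has the building block $\varepsilon_{\xi'}i_{\xi'}$ (equivalently $i_{\xi'}\varepsilon_{\xi'}$), which is not a scalar multiple of the identity for $0<p<n-1$; the solutions in \cite{KKP16} and \cite{FJS16} indeed involve more than two scalar Gegenbauer-type coefficients. Correspondingly, your expectation that the F-system decouples into two independent scalar Gegenbauer equations is too optimistic --- the equations are genuinely coupled through the interior and exterior products by $\xi'$, and in the cited references they are solved as a system (with a suitable triangularization rather than a full decoupling). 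These are corrections at the level of execution; the overall strategy you describe is the right one and is exactly what the referenced works carry out.
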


We remark that for $k=0$ the polynomial $p_{\lambda,\nu}$ is constant, so that the operator $C_{\lambda,\nu,p}$ is (up to scaling) the restriction of differential forms on $\RR^n$ to $\RR^{n-1}$.

\begin{theorem}\label{thm:DiffSBOsContinuous}
Assume $0\leq p\leq\frac{n}{2}$ and suppose that $\lambda+\rho-\nu-\rho'=-2k$, $k\geq0$. If $\lambda\in(-\frac{n}{2}+p,0)$ and $\nu\in(-\frac{n-1}{2}+p,0)$, then the differential symmetry breaking operator $C_{\lambda,\nu,p}$ extends to a non-trivial continuous linear operator between the Hilbert spaces $I_{\lambda,p}$ and $J_{\nu,p}$:
$$ C_{\lambda,\nu,p}:I_{\lambda,p}\to J_{\nu,p}. $$
\end{theorem}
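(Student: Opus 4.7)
The plan is to prove boundedness by a direct Fourier-analytic estimate, using the equivalent norms $|\cdot|_\lambda$ and $|\cdot|_\nu$ from the lemma preceding Proposition~\ref{prop:CSInnerProduct}, and to deduce non-triviality from the construction of $C_{\lambda,\nu,p}$.

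First, I would reformulate the operator on the Fourier side. Since $P_{\lambda,\nu} = p_{\lambda,\nu}(\partial_1,\ldots,\partial_n)$ where $p_{\lambda,\nu}$ is an $\End$-valued homogeneous polynomial of degree $2k$, the Fourier transform acts as multiplication by $p_{\lambda,\nu}(i\xi_1,\ldots,i\xi_n)$, whose operator norm satisfies $\|p_{\lambda,\nu}(i\xi)\|\leq M|\xi|^{2k}$ for some constant $M>0$. Restriction to $\RR^{n-1}$ corresponds, via the Fourier transform, to integration in the last variable:
\begin{equation*}
\widehat{C_{\lambda,\nu,p}u}(\xi') = (2\pi)^{-1/2}\int_{\RR}p_{\lambda,\nu}(i\xi',i\xi_n)\widehat{u}(\xi',\xi_n)\,d\xi_n.
\end{equation*}

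Next, I would apply Cauchy--Schwarz with the weight $|\xi|^{-2\lambda}$ split between the two factors. Writing $\xi=(\xi',\xi_n)$ and $|\xi|^2=|\xi'|^2+\xi_n^2$, Cauchy--Schwarz yields
\begin{equation*}
\|\widehat{C_{\lambda,\nu,p}u}(\xi')\|^2 \leq \frac{1}{2\pi}\,I(\xi')\cdot\int_{\RR}|\xi|^{-2\lambda}\|\widehat{u}(\xi',\xi_n)\|^2\,d\xi_n
\end{equation*}
with $I(\xi'):=\int_{\RR}\|p_{\lambda,\nu}(i\xi',i\xi_n)\|^2|\xi|^{2\lambda}\,d\xi_n \leq M^2\int_{\RR}|\xi|^{4k+2\lambda}\,d\xi_n$. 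The constraint $\lambda<-\tfrac{1}{2}-2k$ (which is equivalent to $\nu<0$, using $\nu=\lambda+\tfrac{1}{2}+2k$ from $\lambda+\rho-\nu-\rho'=-2k$) ensures $4k+2\lambda<-1$, so by the substitution $\xi_n=|\xi'|t$ the inner integral converges and $I(\xi')\leq C|\xi'|^{4k+2\lambda+1}$.

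Integrating against $|\xi'|^{-2\nu}$ over $\RR^{n-1}$ and using the crucial exponent balance
\begin{equation*}
-2\nu+4k+2\lambda+1 = -2\bigl(\lambda+\tfrac{1}{2}+2k\bigr)+4k+2\lambda+1 = 0,
\end{equation*}
Fubini gives
\begin{equation*}
|C_{\lambda,\nu,p}u|_\nu^2 \;\leq\; \frac{C'}{2\pi}\int_{\RR^n}|\xi|^{-2\lambda}\|\widehat{u}(\xi)\|^2\,d\xi \;=\; \frac{C'}{2\pi}\,|u|_\lambda^2.
\end{equation*}
By the equivalence of $|\cdot|_\lambda$ with $\|\cdot\|_\lambda$ on $I_{\lambda,p}$ (and similarly on $J_{\nu,p}$, which requires $\nu\in(-\tfrac{n-1}{2}+p,0)$), the operator $C_{\lambda,\nu,p}$ extends to a continuous linear map $I_{\lambda,p}\to J_{\nu,p}$. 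Non-triviality is inherited directly from the fact that $C_{\lambda,\nu,p}$ is a non-trivial operator on smooth vectors (for instance, $\calS(\RR^n,\bigwedge^p\CC^n)$ is dense in $I_{\lambda,p}$ and the operator is non-zero there by the classification).

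The main obstacle is the bookkeeping of the three exponents (the polynomial degree $2k$, the source weight $-2\lambda$, and the target weight $-2\nu$) and verifying that the integrability condition for $I(\xi')$ is exactly the hypothesis $\nu<0$, while the integrand in $\xi'$ becomes $|\xi'|^0$ precisely because of the relation $\nu=\lambda+\tfrac{1}{2}+2k$. Apart from this, the argument is a routine trace-type Sobolev estimate, with the vector-bundle structure absorbed into the uniform polynomial bound $\|p_{\lambda,\nu}(i\xi)\|\leq M|\xi|^{2k}$.
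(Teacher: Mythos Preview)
Your proposal is correct and follows essentially the same argument as the paper: Fourier representation of $C_{\lambda,\nu,p}$ as integration of $p_{\lambda,\nu}(i\xi)\widehat{u}$ over $\xi_n$, the homogeneity bound $\|p_{\lambda,\nu}(i\xi)\|\leq M|\xi|^{2k}$, Cauchy--Schwarz with the weight $|\xi|^{\pm\lambda}$, and the identification $2\lambda+4k+1=2\nu$ to match the target norm. The only cosmetic differences are that you bound the polynomial after Cauchy--Schwarz rather than before and that you address non-triviality explicitly; one small slip is that the lemma on equivalent norms comes \emph{after} Proposition~\ref{prop:CSInnerProduct}, not before.
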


\begin{proof}
We write $R$ for the restriction operator $Ru(x')=u(x',0)$, so that $C_{\lambda,\nu,p}=R\circ P_{\lambda,\nu}$. The Fourier inversion formula shows that
$$ \widehat{Ru}(\xi') = (2\pi)^{-\frac{1}{2}} \int_\RR \widehat{u}(\xi',\xi_n)\,d\xi_n. $$
This implies
$$ \widehat{C_{\lambda,\nu,p}u}(\xi') = (2\pi)^{-\frac{1}{2}} \int_\RR \widehat{P_{\lambda,\nu}u}(\xi',\xi_n)\,d\xi_n = (2\pi)^{-\frac{1}{2}} \int_\RR p_{\lambda,\nu}(i\xi_1,\ldots,i\xi_n)\widehat{u}(\xi',\xi_n)\,d\xi_n. $$
Since $p_{\lambda,\nu}$ is homogeneous of degree $2k$ its matrix norm can be estimated by
$$ \|p_{\lambda,\nu}(i\xi_1,\ldots,i\xi_n)\| \leq C\cdot|\xi|^{2k} $$
for some constant $C>0$, whence
$$ \|\widehat{C_{\lambda,\nu,p}u}(\xi')\| \leq C\int_\RR|\xi|^{2k}\|\widehat{u}(\xi',\xi_n)\|\,d\xi_n = C\int_\RR|\xi|^{\lambda+2k}|\xi|^{-\lambda}\|\widehat{u}(\xi',\xi_n)\|\,d\xi_n. $$
Applying the Cauchy--Schwartz inequality gives
$$ \|\widehat{C_{\lambda,\nu,p}u}(\xi')\|^2 \leq C^2\int_\RR|\xi|^{2\lambda+4k}\,d\xi_n\int_\RR|\xi|^{-2\lambda}\|\widehat{u}(\xi',\xi_n)\|^2\,d\xi_n. $$
The first integral can be computed using the substitution $\xi_n=|\xi'|t$:
$$ \int_\RR|\xi|^{2\lambda+4k}\,d\xi_n = |\xi'|^{2\lambda+4k+1}\int_\RR(1+t^2)^{\lambda+2k}\,dt = C'|\xi'|^{2\nu} $$
with $C'=\int_\RR(1+t^2)^{\lambda+2k}\,dt<\infty$ since $\lambda+2k=\nu-\frac{1}{2}<-\frac{1}{2}$. Hence we obtain
$$ |\xi'|^{-2\nu}\|\widehat{C_{\lambda,\nu,p}u}(\xi')\| \leq C^2C'\int_\RR|\xi|^{-2\lambda}\|\widehat{u}(\xi',\xi_n)\|^2\,d\xi_n, $$
so that integration over $\xi'\in\RR^{n-1}$ finally shows that
\begin{equation*}
 |C_{\lambda,\nu,p}u|_\nu^2 \leq C^2C'|u|_\lambda^2.\qedhere
\end{equation*}
\end{proof}

\begin{corollary}\label{cor:SobolevTraceThm}
Assume that $0\leq p\leq\frac{n}{2}$. Then for $\frac{1}{2}<s<\frac{n}{2}-p$ the restriction $u\mapsto u|_{\RR^{n-1}}$ of compactly supported smooth $p$-forms on $\RR^n$ to $\RR^{n-1}$ extends to a continuous linear map
$$ R:\dot{H}^{s,p}(\RR^n)\to\dot{H}^{s-\frac{1}{2},p}(\RR^{n-1}). $$
\end{corollary}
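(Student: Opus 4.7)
The plan is to deduce this directly from Theorem~\ref{thm:DiffSBOsContinuous} by specialising to $k=0$. As remarked immediately after the Kobayashi--Kubo--Pevzner theorem, when $k=0$ the polynomial $p_{\lambda,\nu}$ is constant, so the differential symmetry breaking operator $C_{\lambda,\nu,p}$ coincides with the restriction map $R\colon u\mapsto u|_{\RR^{n-1}}$ up to a nonzero scalar. Thus continuity of $R$ will follow from continuity of $C_{\lambda,\nu,p}$.

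It remains to translate parameters. Recall that $I_{\lambda,p}=\dot{H}^{-\lambda,p}(\RR^n)$ and $J_{\nu,p}=\dot{H}^{-\nu,p}(\RR^{n-1})$ (as Hilbert spaces, since the norms $|\cdot|_\lambda$ and $\|\cdot\|_\lambda$ are equivalent by the lemma preceding Section~\ref{sec:SBO}). Setting $s=-\lambda$, the relation $\lambda+\rho-\nu-\rho'=-2k=0$ with $\rho=\tfrac{n}{2}$ and $\rho'=\tfrac{n-1}{2}$ gives $\nu=\lambda+\tfrac{1}{2}$, hence $-\nu=s-\tfrac{1}{2}$. The hypothesis $\lambda\in(-\tfrac{n}{2}+p,0)$ becomes $s\in(0,\tfrac{n}{2}-p)$, while $\nu\in(-\tfrac{n-1}{2}+p,0)$ becomes $s\in(\tfrac{1}{2},\tfrac{n+1}{2}-p)$; intersecting yields exactly $s\in(\tfrac{1}{2},\tfrac{n}{2}-p)$, matching the range in the statement. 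Applying Theorem~\ref{thm:DiffSBOsContinuous} then gives a continuous map $\dot{H}^{s,p}(\RR^n)\to\dot{H}^{s-\frac{1}{2},p}(\RR^{n-1})$ which agrees with $R$ (up to a scalar) on the dense subspace of compactly supported smooth forms, completing the proof.

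There is no real obstacle; the argument is a bookkeeping exercise with the induction parameters, relying entirely on the just-proven theorem and the equivalence of the two norms on the Hilbert spaces $I_{\lambda,p}$ and $J_{\nu,p}$.
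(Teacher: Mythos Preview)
Your proof is correct and follows exactly the paper's approach: set $\lambda=-s$, $\nu=\tfrac{1}{2}-s$ so that $k=0$, identify $C_{\lambda,\nu,p}$ with the restriction, and invoke Theorem~\ref{thm:DiffSBOsContinuous}. One small arithmetic slip: the condition $\nu>-\tfrac{n-1}{2}+p$ actually gives $s<\tfrac{n}{2}-p$ (not $\tfrac{n+1}{2}-p$), but since you intersect with the other interval anyway, your final range $(\tfrac{1}{2},\tfrac{n}{2}-p)$ is unchanged.
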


\begin{proof}
Let $\lambda=-s$ and $\nu=\frac{1}{2}-s$, then $C_{\lambda,\nu,p}$ is up to a scalar multiple the restriction operator. Now the result follows from Theorem~\ref{thm:DiffSBOsContinuous} since $I_{\lambda,p}=\dot{H}^{-\lambda,p}(\RR^n)$ and $J_{\nu,p}=\dot{H}^{-\nu,p}(\RR^{n-1})$.
\end{proof}

Using the differential symmetry breaking operators $C_{\lambda,\nu,p}$ we can show that certain complementary series representations $\tau_{\nu,p}$ of $G'$ occur as direct summands inside the restriction $\pi_{\lambda,p}|_{G'}$ of a complementary series representation of $G$ to $G'$:

\begin{corollary}\label{cor:DiscreteSummands}
The adjoint operator $C_{\lambda,\nu,p}^*:J_{\nu,p}\to I_{\lambda,p}$ is a $G'$-equivariant isometry (up to a scalar) and identifies $\tau_{\nu,p}$ with a subrepresentation of $\pi_{\lambda,p}|_{G'}$. In particular, the Casimir operator $d\pi_{\lambda,p}^\infty(C)$ acts on the image $C_{\lambda,\nu,p}^*(J_{\nu,p})$ by the scalar $\nu^2-\rho'^2+p(n-p-1)$ and the composition $C_{\lambda,\nu,p}\circ C_{\lambda,\nu,p}^*:J_{\nu,p}\to J_{\nu,p}$ is a scalar multiple of the identity.
\end{corollary}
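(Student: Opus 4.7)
My plan is to deduce all four assertions from Schur's lemma applied to $C_{\lambda,\nu,p}\circ C_{\lambda,\nu,p}^*$, together with the standard formula for the Casimir eigenvalue on a principal series. By Theorem~\ref{thm:DiffSBOsContinuous} the operator $C_{\lambda,\nu,p}:I_{\lambda,p}\to J_{\nu,p}$ is a bounded $G'$-intertwiner. Since both $\pi_{\lambda,p}|_{G'}$ and $\tau_{\nu,p}$ are unitary, $\pi_{\lambda,p}(g)^*=\pi_{\lambda,p}(g^{-1})$ and likewise for $\tau_{\nu,p}$; taking Hilbert-space adjoints of the intertwining relation then yields the $G'$-equivariance of $C_{\lambda,\nu,p}^*:J_{\nu,p}\to I_{\lambda,p}$. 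Consequently $C_{\lambda,\nu,p}\circ C_{\lambda,\nu,p}^*$ is a bounded $G'$-intertwiner from the irreducible unitary representation $\tau_{\nu,p}$ to itself, and Schur's lemma forces it to be $c\cdot\id$ for some $c\in\CC$. The identity $\langle C_{\lambda,\nu,p}\,C_{\lambda,\nu,p}^*v,v\rangle=\|C_{\lambda,\nu,p}^*v\|^2\geq 0$ gives $c\geq 0$, and the non-triviality of $C_{\lambda,\nu,p}$ furnished by Theorem~\ref{thm:DiffSBOsContinuous} forces $c>0$; this proves the scalar-multiple-of-identity assertion. From $\|C_{\lambda,\nu,p}^*v\|^2=c\|v\|^2$ one reads off that $C_{\lambda,\nu,p}^*$ is isometric up to $\sqrt c$, hence injective with closed image, and combined with its $G'$-equivariance this realizes $\tau_{\nu,p}$ as a subrepresentation of $\pi_{\lambda,p}|_{G'}$ via the closed $G'$-invariant subspace $C_{\lambda,\nu,p}^*(J_{\nu,p})\subseteq I_{\lambda,p}$.

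For the Casimir statement, $C$ is central in $\calU(\frakg')$ and therefore acts on the irreducible $\tau_{\nu,p}$ by a single scalar $\chi_\nu$; by the $\frakg'$-equivariance of $C_{\lambda,\nu,p}^*$ the operator $d\pi_{\lambda,p}^\infty(C)$ acts by this same $\chi_\nu$ on the smooth vectors inside $C_{\lambda,\nu,p}^*(J_{\nu,p})$. To evaluate $\chi_\nu$ I would redo the calculation of Appendix~\ref{sec:Casimir}, but applied to the Casimir of $G'$ acting on its own principal series $\tau_{\nu,p}^\infty$: writing $C$ in a basis of $\frakg'$ adapted to $\frakm'\oplus\fraka'\oplus\frakn'\oplus\overline{\frakn}'$, with $Y_j\in\overline{\frakn}'$ the $B$-dual of $X_j\in\frakn'$, the contributions from $\fraka'$ and from $\sum_j(X_jY_j+Y_jX_j)$ combine after the usual commutator manipulation to yield $\nu^2-\rho'^2$, while the $\frakm'$-piece $-\sum_{j<k<n}M_{jk}^2$ acts on $\eta_p=\bigwedge^p\CC^{n-1}$ by $+p(n-p-1)$, exactly as in the proof of Proposition~\ref{prop:ActionCasimir} (the minus sign out front arising because the $B$-dual of $M_{jk}$ is $-M_{jk}$). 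Summing produces $\chi_\nu=\nu^2-\rho'^2+p(n-p-1)$, as claimed.

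The only delicate point is the sign bookkeeping in this last computation: one must keep track of the fact that the $\frakm'$-contribution to the Casimir of $\frakg'$ is $-\sum M_{jk}^2$ rather than $+\sum M_{jk}^2$, so that the $\frakm'$-eigenvalue $-p(n-p-1)$ of $\sum M_{jk}^2$ on $\bigwedge^p\CC^{n-1}$ enters $\chi_\nu$ with a plus sign. This sign convention is already built into Proposition~\ref{prop:ActionCasimir}, so no new bookkeeping is really required; apart from this, the proof is a direct application of Schur's lemma to the unitary intertwiner $C_{\lambda,\nu,p}$.
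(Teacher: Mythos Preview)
Your proof is correct and follows essentially the same route as the paper: $G'$-equivariance of the adjoint, Schur's lemma to get the scalar-multiple-of-identity and isometry statements, and identification of the Casimir eigenvalue on $\tau_{\nu,p}$ transported via equivariance. The only cosmetic differences are that the paper applies Schur directly to $C_{\lambda,\nu,p}^*$ (an intertwiner out of an irreducible unitary representation is automatically a scalar multiple of an isometry) rather than first to the composite $C_{\lambda,\nu,p}\circ C_{\lambda,\nu,p}^*$, and that the paper obtains the Casimir scalar $\nu^2-\rho'^2+p(n-p-1)$ by invoking the infinitesimal character of the induced representation $\tau_{\nu,p}$ rather than by rerunning the Appendix~\ref{sec:Casimir} computation for $G'$.
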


\begin{proof}
Since $C_{\lambda,\nu,p}$ is $G'$-intertwining, its adjoint $C_{\lambda,\nu,p}^*$ is $G'$-intertwining as well. Now $\tau_{\nu,p}$ is irreducible and therefore, by Schur's Lemma, the intertwiner $C_{\lambda,\nu,p}^*$ has to be a scalar multiple of an isometry which proves the first statement. To prove the second statement we observe that, as a parabolically induced representation, $\tau_{\nu,p}$ has infinitesimal character $\nu$ plus the infinitesimal character of $\bigwedge^p\CC^{n-1}$. Therefore the Casimir element $C$ acts by $d\tau_{\nu,p}^\infty(C)=\nu^2-\rho'^2+p(n-p-1)$. Since $C_{\lambda,\nu,p}^*$ is $G'$-intertwining, the Casimir element acts by the same scalar on the image $C_{\lambda,\nu,p}^*(J_{\nu,p})$. Finally, the composition $C_{\lambda,\nu,p}\circ C_{\lambda,\nu,p}^*:J_{\nu,p}\to J_{\nu,p}$ is a $G'$-intertwining operator from the irreducible representation $J_{\nu,p}$ to itself and hence a scalar multiple of the identity by Schur's Lemma.
\end{proof}

\begin{proof}[Proof of Theorem~\ref{thm:Dirichlet}]
Let $\lambda=\frac{a-2}{2}\in(-\frac{n}{2}+p,0)$. By Proposition~\ref{prop:ActionCasimir} the operator $\Delta_{a,p}=\Delta_{2(\lambda+1),p}$ only differs from $d\pi_{\lambda,p}^\infty(C)$ by a constant. Now, in any unitary representation the Casimir element defines a self-adjoint operator by \cite[Theorem 4.4.4.3]{War72} which implies that $d\pi_{\lambda,p}^\infty(C)$ (or equivalently $\Delta_{a,p}$) is essentially self-adjoint on $\dot{H}^{\frac{2-a}{2},p}(\RR^n)=I_{\lambda,p}$. Further, by Corollary~\ref{cor:DiscreteSummands} the operator $d\pi_{\lambda,p}^\infty(C)$ has $\nu^2-\rho'^2+p(n-p-1)$ as an eigenvalue whenever
$$ \nu\in\left(\lambda+\frac{1}{2}+2\NN\right)\cap\left(-\frac{n-1}{2}+p,0\right)=\left\{\frac{a-1}{2}+2k:k\in\NN,k<\frac{1-a}{2}\right\}. $$
By Proposition~\ref{prop:ActionCasimir} we have $d\pi_{\lambda,p}^\infty(C)=\Delta_{a,p}+(\lambda+\rho)(\lambda-\rho+1)+p(n-p-1)$, so the operator $\Delta_{a,p}$ has the eigenvalues $(\nu+\rho')(\nu-\rho')-(\lambda+\rho)(\lambda-\rho+1)$. For $\nu=\frac{a-1}{2}+2k$ this expression equals $k(k+a-1)$. This completes the proof of (1).\\
To show (2) consider the special case $\nu=\frac{a-1}{2}$, then $\lambda+\rho-\nu-\rho'=0$, i.e. $k=0$, and therefore the operator $C_{\lambda,\nu,p}$ can be taken to be the restriction of $p$-forms on $\RR^n$ to $\RR^{n-1}$. Corollary~\ref{cor:DiscreteSummands} now implies $C_{\lambda,\nu,p}^*\circ C_{\lambda,\nu,p}=c_{\lambda,\nu,p}\cdot\id$. Hence, for every $f\in\dot{H}^{\frac{1-a}{2},p}(\RR^{n-1})=J_{\nu,p}$ the function $u=c_{\lambda,\nu,p}^{-1}C_{\lambda,\nu,p}^*f\in I_{\lambda,p}=\dot{H}^{\frac{2-a}{2},p}(\RR^n)$ satisfies $u|_{\RR^{n-1}}=C_{\lambda,\nu,p}u=f$ and by Corollary~\ref{cor:DiscreteSummands} also $\Delta_{a,p}u=0$. This establishes the existence of a solution to \eqref{eq:IntroBdyValueProblem}. Uniqueness will be shown in Section~\ref{sec:Uniqueness}.
\end{proof}

\subsection{Integral symmetry breaking operators and Poisson transforms}\label{sec:IntSBOs}

The differential symmetry breaking operators $C_{\lambda,\nu,p}$ for $\lambda+\rho-\nu-\rho'=-2k$, $k\in\NN$, arise as residues of a family $A_{\lambda,\nu,p}:I_{\lambda,p}^\infty\to J_{\nu,p}^\infty$ of symmetry breaking operators which depends meromorphically on $(\lambda,\nu)\in\CC^2$. This family of operators is for $\Re(\lambda+\nu),\Re(\nu)\gg0$ given by the convergent integral
$$ A_{\lambda,\nu,p}u(y) = \int_{\RR^n}\frac{|x_n|^{\lambda-\rho+\nu+\rho'}}{(|x'-y|^2+x_n^2)^{\nu+\rho'+1}}(i_{x-y}\varepsilon_{x-y}-\varepsilon_{x-y}i_{x-y})u(x)\,dx \qquad y\in\RR^{n-1},u\in I_{\lambda,p}^\infty, $$
and extends meromorphically in $(\lambda,\nu)\in\CC^2$ (see \cite{KS18,MO17}). In \cite{KS18} all possible poles and residues of the family $A_{\lambda,\nu,p}$ are obtained.

More important for our purpose is the adjoint of $A_{\lambda,\nu,p}$:
\begin{multline*}
 B_{\lambda,\nu,p}:=A_{-\lambda,-\nu,p}^T:J_{\nu,p}^{-\infty}\to I_{\lambda,p}^{-\infty},\\
 B_{\lambda,\nu,p}f(x) = \int_{\RR^{n-1}}\frac{|x_n|^{-\lambda-\rho-\nu+\rho'}}{(|x'-y|^2+x_n^2)^{-\nu+\rho'+1}}(i_{x-y}\varepsilon_{x-y}-\varepsilon_{x-y}i_{x-y})f(y)\,dy.
\end{multline*}

Restricting $B_{\lambda,\nu,p}$ to $J_{\nu,p}^\infty$ gives a $G'$-intertwining operator $J_{\nu,p}^\infty\to I_{\lambda,p}^{-\infty}$. Another such operator arises from Corollary~\ref{cor:DiscreteSummands} if $\lambda+\rho-\nu-\rho'=-2k$ and $\lambda\in(-\frac{n}{2}+p,0)$, $\nu\in(-\frac{n-1}{2}+p,0)$ using the embeddings $J_{\nu,p}^\infty\subseteq J_{nu,p}$ and $I_{\lambda,p}\subseteq I_{\lambda,p}^{-\infty}$:
$$ C_{\lambda,\nu,p}^*:J_{\nu,p}^\infty\hookrightarrow J_{\nu,p}\to I_{\lambda,p}\hookrightarrow I_{\lambda,p}^{-\infty}. $$
To relate $B_{\lambda,\nu,p}$ and $C_{\lambda,\nu,p}^*$ we make use of the following \textit{Multiplicity One Theorem}:

\begin{fact}[{see \cite{SZ12}}]\label{fct:MultOneThms}
Let $G={\rm O}(1,n+1)$ and $G'={\rm O}(1,n)$, then for any irreducible Casselman--Wallach representations $\pi$ of $G$ and $\tau$ of $G'$, the space of $G'$-intertwining operators $\tau\to(\pi^*)|_{G'}$ is at most one-dimensional.
\end{fact}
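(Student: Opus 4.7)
The plan is to invoke the Gelfand--Kazhdan criterion, which reduces the multiplicity statement to a question about $G'\times G'$-invariant distributions on $G$. The goal is to find a continuous anti-involution $\sigma\colon G\to G$ with $\sigma(G')=G'$ such that every $G'\times G'$-invariant (tempered) distribution $u$ on $G$ satisfies $\sigma^*u=u$. Once such a $\sigma$ is available, a standard argument shows that if $T_1,T_2$ are two non-zero elements of $\Hom_{G'}(\tau,(\pi^*)|_{G'})$, then the matrix coefficient pairing they induce produces a $G'\times G'$-invariant distribution on $G$; $\sigma$-invariance together with irreducibility of $\pi$ and $\tau$ then forces $T_1$ and $T_2$ to be proportional.

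First I would take $\sigma(g)=g^\top$. Since $G=\mathrm{O}(1,n+1)$ is cut out by a diagonal symmetric bilinear form, $\sigma$ preserves $G$; and $G'$ sits inside $G$ as the upper left $(n+1)\times(n+1)$ block, which is manifestly transpose-stable. So the remaining job is to verify $\sigma$-invariance of every bi-$G'$-invariant distribution on $G$.

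Next I would analyse the $G'\times G'$-orbit structure on $G$ via a Cartan-type decomposition for the symmetric pair $(G,G')$. Generic $G'\times G'$-orbits admit representatives fixed by $\sigma$, and since the stabilizers there are small, an invariant distribution supported on the smooth part is determined by its values along such a slice, giving $\sigma$-invariance automatically. The real work is to control distributions whose support is contained in the closure of singular orbits. By a Harish--Chandra-style descent at a singular semisimple point $x$, one slices transversally and reduces to studying $Z_{G'}(x)$-invariant distributions on the tangent space $\frakg/\frakg'$, supported in the appropriate nilpotent cone of the resulting (smaller-rank) symmetric pair, together with the task of checking that they are stable under the derived involution $d\sigma$.

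The main obstacle is precisely this last step, which is the technical heart of \cite{SZ12}: ruling out nontrivial nilpotently supported invariant distributions on the tangent space of $G/G'$ that are not $d\sigma$-invariant. I would attack it by induction on the rank of the symmetric pair, feeding the inductive hypothesis into the descent at each singular stratum, and invoking refined analytic tools (algebraic $D$-module arguments and wave front set estimates for nilpotent supports). The base case of the induction -- which for the rank-one pair $(\mathrm{O}(1,n+1),\mathrm{O}(1,n))$ is reached quickly -- still requires a genuine calculation to eliminate exceptional invariant functionals, and this is where any proof of the fact must do honest work rather than formal manipulation.
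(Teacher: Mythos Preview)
The paper does not prove this statement itself; it is recorded as a \emph{Fact} with a bare citation to Sun--Zhu~\cite{SZ12} and is used as a black box in the proof of Theorem~\ref{thm:Poisson}. Your proposal is a faithful high-level outline of the Gelfand--Kazhdan strategy that Sun--Zhu actually implement (the anti-involution $\sigma(g)=g^\top$, the $G'\times G'$-orbit analysis, Harish-Chandra descent to the tangent space, and the control of nilpotently supported invariant distributions), so there is nothing to compare against in the present paper beyond noting that you are sketching the argument of the cited reference rather than anything the authors supply here.
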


Here a representation $\pi$ of $G$ is called \textit{Casselman--Wallach} if it is a smooth representation on a Fr\'{e}chet space which is admissible, of moderate growth and finite under the center of the universal enveloping algebra. We note that the representations $\pi_{\lambda,p}^\infty$ of $G$ and $\tau_{\nu,p}^\infty$ of $G'$ are Casselman--Wallach.

\begin{proof}[Proof of Theorem~\ref{thm:Poisson}]
Let $\lambda=\frac{a-2}{2}$ and $\nu=\frac{a-1}{2}$, then $\lambda+\rho-\nu-\rho'=0$, so that both $B_{\lambda,\nu,p}$ and $C_{\lambda,\nu,p}^*$ define $G'$-intertwining operators $J_{\nu,p}^\infty\to I_{\lambda,p}^{-\infty}$ between the representations $\tau_{\nu,p}^\infty$ and $\pi_{\lambda,p}^{-\infty}|_{G'}=(\pi_{-\lambda,p}^\infty)^*|_{G'}$. Note that the representations $(\tau_{\nu,p}^\infty,J_{\nu,p}^\infty)$ of $G'$ and $(\pi_{-\lambda,p}^\infty,I_{-\lambda,p}^\infty)$ of $G$ are irreducible. Hence, by Fact~\ref{fct:MultOneThms} the operators $B_{\lambda,\nu,p}$ and $C_{\lambda,\nu,p}^*$ are proportional. By the proof of Theorem~\ref{thm:Dirichlet} the Poisson transform $P_{a,p}$ is a scalar multiple of $C_{\lambda,\nu,p}^*$, so it follows that $P_{a,p}=c_{a,p}B_{\lambda,\nu,p}$ for a constant $c_{a,p}$ depending only on $a$ and $p$. This shows (1) up to the computation of $c_{a,p}$ which is carried out in Section~\ref{sec:ExplicitValues}. The proof of the isometry property (2) is contained in Section~\ref{sec:Isometry}.
\end{proof}

\section{Poisson transform of constant forms}\label{sec:ExplicitValues}

In this section we compute the Poisson transform of a constant $p$-form. This is used to deduce the explicit value of the constant $c_{a,p}$, and also in Section~\ref{sec:DtoN} to compute the Dirichlet-to-Neumann map of the boundary value problem~\eqref{eq:IntroBdyValueProblem}.

Let $\omega\in\bigwedge^p\CC^{n-1}$ be a constant $p$-form on $\RR^{n-1}$. We will also view $\omega$ as a constant $p$-form on $\RR^n$ which does not contain $dx_n$. Then
\begin{align*}
 P_{a,p}\omega(x) &= c_{a,p} \int_{\RR^{n-1}} \frac{|x_n|^{1-a}}{(|x'-y|^2+x_n^2)^{\frac{n-a+2}{2}}}(i_{x-y}\varepsilon_{x-y}-\varepsilon_{x-y}i_{x-y})\omega\,dy\\
 &= c_{a,p} \int_{\RR^{n-1}} \frac{|x_n|^{1-a}}{(|y|^2+x_n^2)^{\frac{n-a+2}{2}}}(i_{(y,x_n)}\varepsilon_{(y,x_n)}-\varepsilon_{(y,x_n)}i_{(y,x_n)})\omega\,dy.
\end{align*}
We have
\begin{multline*}
 (i_{(y,x_n)}\varepsilon_{(y,x_n)}-\varepsilon_{(y,x_n)}i_{(y,x_n)})\\
 = (i_y\varepsilon_y-\varepsilon_yi_y)+x_n(i_y\varepsilon_{e_n}+i_{e_n}\varepsilon_y-\varepsilon_{e_n}i_y-\varepsilon_yi_{e_n})+x_n^2(i_{e_n}\varepsilon_{e_n}-\varepsilon_{e_n}i_{e_n}).
\end{multline*}
Since the remaining part of the integrand is an even function of $y$, the integral over $x_n(i_y\varepsilon_{e_n}+i_{e_n}\varepsilon_y-\varepsilon_{e_n}i_y-\varepsilon_yi_{e_n})$ vanishes. Further, the substition $y=x_nw$ yields
\begin{multline*}
 P_{a,p}\omega(x) = c_{a,p} \int_{\RR^{n-1}} \frac{1}{(1+|w|^2)^{\frac{n-a+2}{2}}}(i_w\varepsilon_w-\varepsilon_wi_w)\omega\,dw\\
 + c_{a,p} \int_{\RR^{n-1}} \frac{1}{(1+|w|^2)^{\frac{n-a+2}{2}}}(i_{e_n}\varepsilon_{e_n}-\varepsilon_{e_n}i_{e_n})\omega\,dw.
\end{multline*}
The second integral is easily evaluated using the Beta integral:
$$ \int_{\RR^{n-1}} \frac{1}{(1+|w|^2)^{\frac{n-a+2}{2}}}\,dw = \frac{2\pi^{\frac{n-1}{2}}}{\Gamma(\frac{n-1}{2})}\int_0^\infty(1+r^2)^{-\frac{n-a+2}{2}}r^{n-2}\,dr = \frac{\pi^{\frac{n-1}{2}}\Gamma(\frac{3-a}{2})}{\Gamma(\frac{n-a+2}{2})}. $$
For the first integral we note that
$$ i_w\varepsilon_w-\varepsilon_wi_w = \sum_{i,j=1}^{n-1}w_iw_j(i_{e_i}\varepsilon_{e_j}-\varepsilon_{e_j}i_{e_i}). $$
Integrating $w_iw_j$ with $i\neq j$ gives zero whereas for $i=j$:
\begin{multline*}
 \int_{\RR^{n-1}} \frac{w_i^2}{(1+|w|^2)^{\frac{n-a+2}{2}}}\,dz = \frac{1}{n-1}\int_{\RR^{n-1}}\frac{|w|^2}{(1+|w|^2)^{\frac{n-a+2}{2}}}\,dz\\
 = \frac{2\pi^{\frac{n-1}{2}}}{(n-1)\Gamma(\frac{n-1}{2})}\int_0^\infty(1+r^2)^{-\frac{n-a+2}{2}}r^{n}\,dr = \frac{\pi^{\frac{n-1}{2}}\Gamma(\frac{1-a}{2})}{2\,\Gamma(\frac{n-a+2}{2})}.
\end{multline*}
Putting this together gives
\begin{align*}
 P_{a,p}\omega(x) &= c_{a,p}\frac{\pi^{\frac{n-1}{2}}\Gamma(\frac{1-a}{2})}{2\,\Gamma(\frac{n-a+2}{2})}\sum_{j=1}^{n-1}(i_{e_j}\varepsilon_{e_j}-\varepsilon_{e_j}i_{e_j})\omega + c_{a,p}\frac{\pi^{\frac{n-1}{2}}\Gamma(\frac{3-a}{2})}{\Gamma(\frac{n-a+2}{2})}(i_{e_n}\varepsilon_{e_n}-\varepsilon_{e_n}i_{e_n})\omega\\
 &= c_{a,p}\frac{\pi^{\frac{n-1}{2}}\Gamma(\frac{1-a}{2})}{2\,\Gamma(\frac{n-a+2}{2})}\left(n-2p-a(i_{e_n}\varepsilon_{e_n}-\varepsilon_{e_n}i_{e_n})\right)\omega\\
 &= c_{a,p}\frac{\pi^{\frac{n-1}{2}}(n-2p-a)\Gamma(\frac{1-a}{2})}{2\,\Gamma(\frac{n-a+2}{2})}\omega.
\end{align*}
Since $P_{a,p}\omega|_{\RR^{n-1}}=\omega$ we find
$$ c_{a,p} = \frac{2\,\Gamma(\frac{n-a+2}{2})}{\pi^{\frac{n-1}{2}}(n-2p-a)\Gamma(\frac{1-a}{2})}. $$

\section{Uniqueness}\label{sec:Uniqueness}

In this section we show the uniqueness of solutions to the boundary value problem~\eqref{eq:IntroBdyValueProblem} in the homogeneous Sobolev space $\dot{H}^{\frac{2-a}{2},p}(\RR^n)$. This is done using the Euclidean Fourier transform under which the differential equation $\Delta_{a,p}u=0$ essentially corresponds to a vector-valued second order differential equation in one variable which we solve explicitly (see Theorem~\ref{thm:FTofPoisson}.

\subsection{Fourier transform of the boundary value problem}

We use the following normalization of the Euclidean Fourier transform:
$$ \calF_{\RR^n}u(\xi) = \widehat{u}(\xi) = (2\pi)^{-\frac{n}{2}}\int_{\RR^n}e^{-ix\cdot\xi}u(x)\,dx. $$
Then
$$ \widehat{x_ju}(\xi) = i\partial_{\xi_j}\widehat{u}(\xi), \qquad \widehat{\partial_{x_j}u}(\xi) = i\xi_j\widehat{u}(\xi). $$

Now assume $u\in\dot{H}^{\frac{2-a}{2},p}(\RR^n)$ is a solution of \eqref{eq:IntroBdyValueProblem}, then
\begin{align*}
 0 = \widehat{\Delta_a u}(\xi) &= \Big(\partial_{\xi_n}^2|\xi|^2-a\partial_{\xi_n}\xi_n-2\partial_{\xi_n}\big(i_{e_n}\varepsilon_{\xi'}+\varepsilon_{e_n}i_{\xi'}\big)-(n-2p)\varepsilon_{e_n}i_{e_n}\Big)\widehat{u}(\xi)\\
 &= \Big(|\xi|^2\partial_{\xi_n}^2-(a-4)\xi_n\partial_{\xi_n}-2\big(i_{e_n}\varepsilon_{\xi'}+\varepsilon_{e_n}i_{\xi'}\big)\partial_{\xi_n}-(a-2)-(n-2p)\varepsilon_{e_n}i_{e_n}\Big)\widehat{u}(\xi).
\end{align*}
and
$$ \int_\RR\widehat{u}(\xi',\xi_n)\,d\xi_n = \sqrt{2\pi}\widehat{f}(\xi'). $$
Let $z=\frac{\xi_n}{|\xi'|}$ and define
$$ v(\xi',z) := \widehat{u}(\xi',|\xi'|z), $$
then
$$ \widehat{\Delta_au}(\xi) = \calD_a v(\xi',z) \qquad \mbox{and} \qquad \int_\RR v(\xi',z)\,dz = \frac{\sqrt{2\pi}}{|\xi'|}\widehat{f}(\xi'), $$
where
$$ \calD_a = (1+z^2)\frac{d^2}{dz^2}-\Big[(a-4)z+2\big(i_{e_n}\varepsilon_{\widehat{\xi'}}+\varepsilon_{e_n}i_{\widehat{\xi'}}\big)\Big]\frac{d}{dz}-(a-2)-(n-2p)\varepsilon_{e_n}i_{e_n} $$
with $\widehat{\xi'}=\xi'/|\xi'|$. We decompose
\begin{equation}
 \widehat{f}(\xi') = \widehat{f}_{\rm I}(\xi')+\widehat{\xi}'\wedge\widehat{f}_{\rm II}(\xi')\label{eq:DecompositionF}
\end{equation}
with $i_{\xi'}\widehat{f}_R(\xi')=0$ for $R={\rm I,II}$, and similarly
$$ v(\xi',z) = \frac{\sqrt{2\pi}}{|\xi'|}\Big[v_{\rm I}(\xi',z)+\widehat{\xi}'\wedge v_{\rm II}(\xi',z)+e_n\wedge v_{\rm III}(\xi',z)+e_n\wedge\widehat{\xi}'\wedge v_{\rm IV}(\xi',z)\Big], $$
with $i_{\xi'}v_R(\xi',z)=i_{e_n}v_R(\xi',z)=0$ for $R={\rm I,II,III,IV}$. Then
\begin{align}\label{eq:NormalizationPhi}
\begin{split}
 \int_\RR v_R(\xi',z)\,dz &= \widehat{f}_R(\xi') \qquad \mbox{for }R={\rm I,II},\\
 \int_\RR v_R(\xi',z)\,dz &= 0 \qquad\qquad \mbox{for }R={\rm III,IV},
\end{split}
\end{align}
and for fixed $\xi'$ the following ODEs are satisfied:
\begin{align*}
 & \Big[(1+z^2)\frac{d^2}{dz^2}-(a-4)z\frac{d}{dz}-(a-2)\Big]v_{\rm I} = 0,\\
 & \Big[(1+z^2)\frac{d^2}{dz^2}-(a-4)z\frac{d}{dz}-(a-2)\Big]v_{\rm II}+2\frac{d}{dz}v_{\rm III} = 0,\\
 & \Big[(1+z^2)\frac{d^2}{dz^2}-(a-4)z\frac{d}{dz}-(n-2p+a-2)\Big]v_{\rm III}-2\frac{d}{dz}v_{\rm II} = 0,\\
 & \Big[(1+z^2)\frac{d^2}{dz^2}-(a-4)z\frac{d}{dz}-(n-2p+a-2)\Big]v_{\rm IV} = 0.
\end{align*}
Further, the condition $u\in\dot{H}^{\frac{2-a}{2},p}(\RR^n)$ implies that
\begin{align*}
 |u|_\lambda^2 &= \int_{\RR^n}|\xi|^{2-a}\|\widehat{u}(\xi)\|^2\,d\xi = \int_{\RR^{n-1}}|\xi'|^{3-a}\int_\RR(1+z^2)^{\frac{2-a}{2}}\|v(\xi',z)\|^2\,dz\,d\xi'
\end{align*}
is finite. Therefore, for every $R={\rm I,II,III,IV}$ we must have
\begin{equation}
 \int_\RR|v_R(\xi',z)|^2(1+z^2)^{\frac{2-a}{2}}\,dz < \infty.\label{eq:L2ConditionPhi}
\end{equation}

\subsection{Equation I} The ODE for $v_{\rm I}$ has a regular singularity at $z=+\infty$. We therefore substitute $y=z^{-1}$ and find
$$ \Big[(1+y^2)y^2\frac{d^2}{dy^2} + \big[(a-2)y+2y^3\big]\frac{d}{dy} - (a-2)\Big]v_{\rm I} = 0. $$
The corresponding indicial equation at $y=0$ is
$$ \mu(\mu-1)+(a-2)\mu-(a-2) = 0 $$
which has the two roots $\mu_1=1$ and $\mu_2=2-a$. Since $a<1$, the roots are distinct and we have $\mu_1<\frac{3-a}{2}<\mu_2$. Hence, there exist two linearly independent solutions with asymptotic behaviour $\sim y^{\mu_1}=z^{-\mu_1}$ and $\sim y^{\mu_2}=z^{-\mu_2}$ as $z\to+\infty$. If $v_{\rm I}(\xi',z)\sim z^{-\mu_1}$ as $z\to+\infty$, the integral \eqref{eq:L2ConditionPhi} diverges, whence $v_{\rm I}(\xi',z)$ has to be a scalar multiple of the solution with asymptotic behaviour $\sim z^{-\mu_2}$ as $z\to+\infty$. To find this solution we first rewrite the differential equation using $x=-z^2$ to find
$$ \Bigg[x(1-x)\frac{d^2}{dx^2}+\Big(\frac{1}{2}+\frac{a-5}{2}x\Big)\frac{d}{dx}+\frac{a-2}{4}\Bigg]\phi_{\rm I}(z) = 0, $$
which is the hypergeometric equation with $\alpha=\frac{\mu_1}{2}=\frac{1}{2}$, $\beta=\frac{\mu_2}{2}=\frac{2-a}{2}$ and $\gamma=\frac{1}{2}$. The solution with asymptotic behaviour $\sim z^{-\mu_2}=(-x)^{-\mu_2/2}$ as $z\to+\infty$ resp. $x\to-\infty$ is given by
$$ (-x)^{-\beta}{_2F_1}(\beta,1+\beta-\gamma;1+\beta-\alpha;x^{-1}) = (1-x)^{\frac{a-2}{2}} = (1+z^2)^{\frac{a-2}{2}} $$
and its asymptotic behavious as $z\to-\infty$ is also $\sim |z|^{-\mu_2}$ whence the $L^2$-condition \eqref{eq:L2ConditionPhi} is indeed satisfied. With the normalization \eqref{eq:NormalizationPhi} we find
$$ v_{\rm I}(\xi',z) = \frac{\Gamma(\frac{2-a}{2})}{\sqrt{\pi}\Gamma(\frac{1-a}{2})}(1+z^2)^{\frac{a-2}{2}}\widehat{f}_{\rm I}(\xi'), $$
where we have used the beta integral formula to compute the relevant integral.

\subsection{Equation IV} Next we consider the ODE for $v_{\rm IV}$. As above it has a regular singularity at $z=+\infty$ with corresponding indicial equation
$$ \mu(\mu-1)+(a-2)\mu-(n-2p+a-2) = 0 $$
whose two roots are
$$ \mu_1 = \tfrac{3-a}{2}-\sqrt{\left(\tfrac{3-a}{2}\right)^2+(n-2p+a-2)} \quad \mbox{and} \quad \mu_2 = \tfrac{3-a}{2}+\sqrt{\left(\tfrac{3-a}{2}\right)^2+(n-2p+a-2)}. $$
Since $2-n+2p<a<1$, the roots are also distinct and we have $\mu_1<\frac{3-a}{2}<\mu_2$. Hence, there exist two linearly independent solutions with asymptotic behaviour $\sim z^{-\mu_1}$ and $\sim z^{-\mu_2}$ as $z\to+\infty$. Again, the asymptotic behaviour $\sim z^{-\mu_1}$ as $z\to+\infty$ can be ruled out due to the $L^2$-condition \eqref{eq:L2ConditionPhi}, whence $v_{\rm IV}(\xi',z)$ has to be a scalar multiple of the solution with asymptotic behaviour $\sim z^{-\mu_2}$ as $z\to+\infty$. As above, the substitution $x=-z^2$ gives
$$ \Bigg[x(1-x)\frac{d^2}{dx^2}+\Big(\frac{1}{2}+\frac{a-5}{2}x\Big)\frac{d}{dx}+\frac{n-2p+a-2}{4}\Bigg]\phi_{\rm IV}(z) = 0, $$
which is the hypergeometric equation with $\alpha=\frac{\mu_1}{2}$, $\beta=\frac{\mu_2}{2}$ and $\gamma = \frac{1}{2}$. The solution with asymptotic behaviour $\sim z^{-\mu_2}=(-x)^{-\mu_2/2}$ as $z\to+\infty$ resp. $x\to-\infty$ is given by
\begin{multline*}
 (-x)^{-\beta}{_2F_1}(\beta,1+\beta-\gamma;1+\beta-\alpha;x^{-1}) = \frac{\Gamma(1-\gamma)\Gamma(1+\beta-\alpha)}{\Gamma(1+\beta-\gamma)\Gamma(1-\alpha)}{_2F_1}(\alpha,\beta;\gamma;x)\\
 +\frac{\Gamma(\gamma-1)\Gamma(1+\beta-\alpha)}{\Gamma(\beta)\Gamma(\gamma-\alpha)}(-x)^{1-\gamma}{_2F_1}(1+\alpha-\gamma,1+\beta-\gamma;2-\gamma;x).
\end{multline*}
Resubstituting $x=-z^2$ gives
$$ \frac{\Gamma(\frac{1}{2})\Gamma(\frac{\mu_2-\mu_1+2}{2})}{\Gamma(\frac{\mu_2+1}{2})\Gamma(\frac{2-\mu_1}{2})}{_2F_1}(\tfrac{\mu_1}{2},\tfrac{\mu_2}{2};\tfrac{1}{2};-z^2) + \frac{\Gamma(-\frac{1}{2})\Gamma(\frac{\mu_2-\mu_1+2}{2})}{\Gamma(\frac{\mu_2}{2})\Gamma(\frac{1-\mu_1}{2})}z\cdot{_2F_1}(\tfrac{\mu_1+1}{2},\tfrac{\mu_2+1}{2};\tfrac{3}{2};-z^2) $$
which has asymptotic behaviour $\sim |z|^{-\mu_1}$ as $z\to-\infty$ and therefore the $L^2$-condition \eqref{eq:L2ConditionPhi} is violated. This implies $v_{\rm IV}=0$.

\subsection{Equations II \& II} Now we treat the system of ODEs for $v_{\rm II}$ and $v_{\rm III}$:
$$ (1+z^2)\frac{d^2}{dz^2}\begin{pmatrix}v_{\rm II}\\v_{\rm III}\end{pmatrix}-\begin{pmatrix}(a-4)z&-2\\2&(a-4)z\end{pmatrix}\frac{d}{dz}\begin{pmatrix}v_{\rm II}\\v_{\rm III}\end{pmatrix}-\begin{pmatrix}a-2&0\\0&n-2p+a-2\end{pmatrix}\begin{pmatrix}v_{\rm II}\\v_{\rm III}\end{pmatrix} = 0. $$
To study the behaviour at $z=+\infty$ we again substitute $y=z^{-1}$:
\begin{multline*}
 y^2\frac{d^2}{dy^2}\begin{pmatrix}v_{\rm II}\\v_{\rm III}\end{pmatrix}+\frac{1}{1+y^2}\begin{pmatrix}(a-2)+2y^2&-2y\\2y&(a-2)+2y^2\end{pmatrix}y\frac{d}{dy}\begin{pmatrix}v_{\rm II}\\v_{\rm III}\end{pmatrix}\\
 -\frac{1}{1+y^2}\begin{pmatrix}a-2&0\\0&n-2p+a-2\end{pmatrix}\begin{pmatrix}v_{\rm II}\\v_{\rm III}\end{pmatrix} = 0.
\end{multline*}
The corresponding indicial equation at $y=0$ is
$$ \det\left(\mu(\mu-1)I+\mu\begin{pmatrix}a-2&0\\0&a-2\end{pmatrix}-\begin{pmatrix}a-2&0\\0&n-2p+a-2\end{pmatrix}\right) = 0. $$
Its four roots in increasing order are
\begin{align*}
 \mu_1 &= \tfrac{3-a}{2}-\sqrt{\left(\tfrac{a-3}{2}\right)^2+(n-2p+a-2)},\\
 \mu_2 &= 1,\\
 \mu_3 &= 2-a,\\
 \mu_4 &= \tfrac{3-a}{2}+\sqrt{\left(\tfrac{a-3}{2}\right)^2+(n-2p+a-2)}.
\end{align*}
They are distinct and therefore the system has four independent solutions with asymptotic behaviour $\sim y^{\mu_j}=z^{-\mu_j}$, $j=1,2,3,4$.
The solutions with asymptotics $\sim z^{-\mu_1}$ and $\sim z^{-\mu_2}$ can again be ruled out since they do not satisfy the $L^2$-condition \eqref{eq:L2ConditionPhi}. We now rule out the solution with asymptotics $z^{-\mu_4}$ by making the Ansatz
$$ v_{\rm II}(z) = z\cdot\varphi(-z^2) \qquad \mbox{and} \qquad v_{\rm III}(z) = \psi(-z^2). $$
Then $\varphi$ and $\psi$ solve
\begin{align}
 & x(1-x)\varphi''(x)+\left(\frac{3}{2}-\frac{7-a}{2}x\right)\varphi'(x)-\frac{3-a}{2}\varphi(x)+\psi'(x) = 0,\label{eq:HypergeomSystem1}\\
 & x(1-x)\psi''(x)+\left(\frac{1}{2}-\frac{5-a}{2}x\right)\psi'(x)+\frac{n-2p+a-2}{4}\psi(x)+x\varphi'(x)+\frac{1}{2}\varphi(x) = 0.\label{eq:HypergeomSystem2}
\end{align}
Differentiating \eqref{eq:HypergeomSystem2} once and inserting \eqref{eq:HypergeomSystem1} into the resulting equation for $\psi'$, $\psi''$ and $\psi'''$ gives
\begin{multline*}
 \left(x(1-x)\frac{d^2}{dx^2}+\left(\frac{3}{2}-\frac{7-a}{2}x\right)\frac{d}{dx}-\frac{3-a}{2}\right)\\
 \times\left(x(1-x)\frac{d^2}{dx^2}+\left(\frac{3}{2}-\frac{9-a}{2}x\right)\frac{d}{dx}+\frac{n-2p+3a-12}{4}\right)\varphi(x)=0,
\end{multline*}
and inserting \eqref{eq:HypergeomSystem1} directly into \eqref{eq:HypergeomSystem2} gives
\begin{multline*}
 \frac{n-2p+a-2}{4}\psi(x) = x^2(1-x)^2\varphi'''(x)+x(1-x)(3-(8-a)x)\varphi''(x)\\
 +\left(\frac{3}{4}+\left(2a-\frac{23}{2}\right)x+\frac{(5-a)(11-a)}{4}x^2\right)\varphi'(x)-\frac{5-a}{2}\left(\frac{1}{2}-\frac{3-a}{2}x\right)\varphi(x).
\end{multline*}
In particular, we have a solution $(\varphi,\psi)$ for every solution $\varphi$ of
$$ \left(x(1-x)\frac{d^2}{dx^2}+\left(\frac{3}{2}-\frac{9-a}{2}x\right)\frac{d}{dx}+\frac{n-2p+3a-12}{4}\right)\varphi(x) = 0 $$
which is the hypergeometric equation with
\begin{align*}
 \alpha &= \frac{1}{2}\left(\frac{7-a}{2}+\sqrt{\left(\frac{3-a}{2}\right)^2+(n-2p+a-2)}\right),\\
 \beta &= \frac{1}{2}\left(\frac{7-a}{2}-\sqrt{\left(\frac{3-a}{2}\right)^2+(n-2p+a-2)}\right),\\
 \gamma &= \tfrac{3}{2}.
\end{align*}
The solution belonging to $z^{-\mu_4}$ is
\begin{align*}
 \varphi(x) ={}& (-x)^{-\alpha}{_2F_1}(\alpha,1+\alpha-\gamma;1+\alpha-\beta;x^{-1})\\
 ={}& \frac{\Gamma(1-\gamma)\Gamma(1+\alpha-\beta)}{\Gamma(1+\alpha-\gamma)\Gamma(1-\beta)}{_2F_1}(\alpha,\beta;\gamma;x)\\
 &+\frac{\Gamma(\gamma-1)\Gamma(1+\alpha-\beta)}{\Gamma(\alpha)\Gamma(\gamma-\beta)}(-x)^{1-\gamma}{_2F_1}(1+\alpha-\gamma,1+\beta-\gamma;2-\gamma;x).
\end{align*}
Resubstituting $x=-z^2$ gives
\begin{multline*}
 \frac{\Gamma(1-\gamma)\Gamma(1+\alpha-\beta)}{\Gamma(1+\alpha-\gamma)\Gamma(1-\beta)}{_2F_1}(\alpha,\beta;\gamma;-z^2)\\
 +\frac{\Gamma(\gamma-1)\Gamma(1+\alpha-\beta)}{\Gamma(\alpha)\Gamma(\gamma-\beta)}z^{-1}{_2F_1}(1+\alpha-\gamma,1+\beta-\gamma;2-\gamma;-z^2).
\end{multline*}
Since $\alpha,\gamma-\beta>0$  the coefficient of the second factor is non-zero and therefore the solution does not extend from $(0,\infty)$ to $\RR$. Consequently, the one possible solution of equations II \& III is the one with asymptotics $\sim z^{-\mu_3}$. To find this solution we make the Ansatz
$$ v_{\rm II}(z) = \varphi(-z^2) \qquad \mbox{and} \qquad v_{\rm III}(z) = z\cdot\psi(-z^2), $$
then $\varphi$ and $\psi$ solve the system
\begin{align}
 & x(1-x)\varphi''(x)+\left(\frac{1}{2}-\frac{5-a}{2}x\right)\varphi'(x)-\frac{2-a}{4}\varphi(x)-x\psi'(x)-\frac{1}{2}\psi(x) = 0,\label{eq:HypergeomSystem3}\\
 & x(1-x)\psi''(x)+\left(\frac{3}{2}-\frac{7-a}{2}x\right)\psi'(x)+\frac{n-2p+2a-6}{4}\psi(x)-\varphi'(x) = 0.\label{eq:HypergeomSystem4}
\end{align}
 
Differentiating \eqref{eq:HypergeomSystem3} once and inserting \eqref{eq:HypergeomSystem4} into the resulting equation for $\varphi'$, $\varphi''$ and $\varphi'''$ gives
\begin{multline*}
 \left(x(1-x)\frac{d^2}{dx^2}+\left(\frac{3}{2}-\frac{7-a}{2}x\right)\frac{d}{dx}+\frac{n-2p+2a-6}{4}\right)\\
 \times\left(x(1-x)\frac{d^2}{dx^2}+\left(\frac{3}{2}-\frac{9-a}{2}x\right)\frac{d}{dx}-\frac{12-3a}{4}\right)\psi(x)=0,
\end{multline*}
and inserting \eqref{eq:HypergeomSystem4} directly into \eqref{eq:HypergeomSystem3} gives
\begin{multline*}
 \frac{2-a}{4}\varphi(x) = x^2(1-x)^2\psi'''(x)+x(1-x)(3-(8-a)x)\psi''(x)\\
 +\left(\frac{3}{4}+\frac{n-2p+8a-46}{4}x+\frac{(5-a)(11-a)-n+2p}{4}x^2\right)\psi'(x)\\
 +\left(\frac{n-2p+2a-10}{8}+\frac{(a-5)(n-2p+2a-6)}{8}x\right)\psi(x).
\end{multline*}
In particular, we have a solution $(\varphi,\psi)$ for every solution $\psi$ of
$$ \left(x(1-x)\frac{d^2}{dx^2}+\left(\frac{3}{2}-\frac{9-a}{2}x\right)\frac{d}{dx}-\frac{3(4-a)}{4}\right)\psi(x) = 0 $$
which is the hypergeometric equation with
$$ \alpha = \frac{3}{2}, \quad  \beta = \frac{4-a}{2}, \quad \gamma = \frac{3}{2}. $$
The solution belonging to $z^{-\mu_3}$ is
$$ \psi(x) = (-x)^{-\beta}{_2F_1}(\beta,1+\beta-\gamma;1+\beta-\alpha;x^{-1}) = (1-x)^{\frac{a-4}{2}} $$
and hence
$$ \varphi(x) = \frac{1}{2(2-a)}\left((n-2p-a+2)-(n-2p+a-2)x\right)(1-x)^{\frac{a-4}{2}}. $$
With the normalization \eqref{eq:NormalizationPhi} we obtain
\begin{align*}
 v_{\rm II}(\xi',z) &= \frac{\Gamma(\frac{2-a}{2})}{\sqrt{\pi}(n-2p-a)\Gamma(\frac{1-a}{2})}\big[(n-2p+a-2)z^2+(n-2p-a+2)\big](1+z^2)^{\frac{a-4}{2}}\widehat{f}_{\rm II}(\xi'),\\
 v_{\rm III}(\xi',z) &= \frac{\Gamma(\frac{2-a}{2})}{\sqrt{\pi}(n-2p-a)\Gamma(\frac{1-a}{2})}2(2-a)z(1+z^2)^{\frac{a-4}{2}}\widehat{f}_{\rm II}(\xi').
\end{align*}

\subsection{The Fourier transform of the solution}

Summarizing we find
\begin{multline}
 v(\xi',z) = \frac{\sqrt{2}\Gamma(\frac{2-a}{2})}{(n-2p-a)\Gamma(\frac{1-a}{2})}|\xi'|^{-1}\Big[(n-2p-a)(1+z^2)^{\frac{a-2}{2}}\widehat{f}_{\rm I}(\xi')\\
 +[(n-2p+a-2)z^2+(n-2p-a+2)](1+z^2)^{\frac{a-4}{2}}\widehat{\xi}'\wedge\widehat{f}_{\rm II}(\xi')\\
 +2(2-a)z(1+z^2)^{\frac{a-4}{2}}e_n\wedge\widehat{f}_{\rm II}(\xi')\Big].\label{eq:SolutionV}
\end{multline}
which implies the following:

\begin{theorem}\label{thm:FTofPoisson}
Assume $0\leq p\leq\frac{n}{2}$ and $2-n+2<a<1$. Then for $f\in\dot{H}^{\frac{1-a}{2},p}(\RR^{n-1})$ with
$$ \widehat{f}(\xi') = \widehat{f}_{\rm I}(\xi')+\widehat{\xi'}\wedge\widehat{f}_{\rm II}(\xi') $$
we have
\begin{multline*}
 \widehat{P_{a,p}f}(\xi',\xi_n) = \frac{\sqrt{2}\Gamma(\frac{2-a}{2})}{(n-2p-a)\Gamma(\frac{1-a}{2})}|\xi'|^{1-a}|\xi|^{a-4}\Big[(n-2p-a)|\xi|^2\widehat{f}_{\rm I}(\xi')\\
 +\big[(n-2p-a+2)|\xi'|^2+(n-2p+a-2)\xi_n^2\big]\widehat{\xi}'\wedge\widehat{f}_{\rm II}(\xi')\\
 +2(2-a)\xi_n|\xi'|e_n\wedge\widehat{f}_{\rm II}(\xi')\Big].
\end{multline*}
\end{theorem}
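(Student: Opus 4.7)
The plan is to combine two observations already at our disposal. First, by the existence parts of Theorem~\ref{thm:Dirichlet}(2) and Theorem~\ref{thm:Poisson}(1), the Poisson transform $u:=P_{a,p}f$ belongs to $\dot{H}^{\frac{2-a}{2},p}(\RR^n)$ and solves the boundary value problem~\eqref{eq:IntroBdyValueProblem} with boundary trace $f$. Second, the analysis in the preceding subsections shows that \emph{any} element $u\in\dot{H}^{\frac{2-a}{2},p}(\RR^n)$ satisfying $\Delta_{a,p}u=0$ with trace $f$ has $v(\xi',z):=\widehat{u}(\xi',|\xi'|z)$ given by the explicit formula~\eqref{eq:SolutionV}. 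Applying this to $u=P_{a,p}f$, we have a closed-form expression for $\widehat{P_{a,p}f}$ in the variables $(\xi',z)$, and the theorem is simply the same expression rewritten in the native Fourier coordinates $(\xi',\xi_n)$.

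The only step I would carry out is the substitution $z=\xi_n/|\xi'|$ in~\eqref{eq:SolutionV}. Using the identity $1+z^2=|\xi|^2/|\xi'|^2$, the two hypergeometric-type factors become $(1+z^2)^{(a-2)/2}=|\xi'|^{2-a}|\xi|^{a-2}$ and $(1+z^2)^{(a-4)/2}=|\xi'|^{4-a}|\xi|^{a-4}$. I then pull out the common factor $|\xi'|^{1-a}|\xi|^{a-4}$, which absorbs the prefactor $|\xi'|^{-1}$ in~\eqref{eq:SolutionV}; the three summands of~\eqref{eq:SolutionV} corresponding to $\widehat{f}_{\rm I}$, $\widehat{\xi}'\wedge\widehat{f}_{\rm II}$, and $e_n\wedge\widehat{f}_{\rm II}$ then acquire coefficients $(n-2p-a)|\xi|^2$, $(n-2p-a+2)|\xi'|^2+(n-2p+a-2)\xi_n^2$, and $2(2-a)\xi_n|\xi'|$, respectively, producing exactly the formula in the statement.

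I do not expect any obstacle at this final stage — the step is a few lines of algebra once~\eqref{eq:SolutionV} is in hand. The genuine difficulty of the result is concentrated upstream, in the derivation of~\eqref{eq:SolutionV} itself: decomposing the unknown $v(\xi',z)$ into four parts according to the exterior-algebra structure relative to $e_n$ and $\widehat{\xi}'$; analyzing the decoupled hypergeometric equations for $v_{\rm I}$ and $v_{\rm IV}$ and the coupled system for $(v_{\rm II},v_{\rm III})$, which is reduced to a single hypergeometric equation via the ansatz $v_{\rm II}(z)=\varphi(-z^2)$, $v_{\rm III}(z)=z\,\psi(-z^2)$; and, in each case, ruling out the solutions incompatible with the homogeneous Sobolev integrability condition~\eqref{eq:L2ConditionPhi} through a careful inspection of the indicial exponents at $z=\infty$. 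Assuming those preparatory computations, the theorem as stated is an immediate corollary.
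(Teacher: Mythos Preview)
Your proposal is correct and matches the paper's own argument: the theorem is stated immediately after~\eqref{eq:SolutionV} with the comment ``which implies the following,'' and the passage from~\eqref{eq:SolutionV} to the displayed formula is precisely the substitution $z=\xi_n/|\xi'|$ (with $1+z^2=|\xi|^2/|\xi'|^2$) that you spell out. Your observation that the substantive work lies upstream in the ODE analysis of $v_{\rm I},\dots,v_{\rm IV}$, and that there is no circularity because the existence of $P_{a,p}f$ as a Sobolev-space solution is secured by the representation-theoretic arguments of Section~\ref{sec:SBO}, is also accurate.
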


\section{Isometry}\label{sec:Isometry}

In this section we prove the isometry property in Theorem~\ref{thm:Poisson}~(2) using Theorem~\ref{thm:FTofPoisson}.

\subsection{Norm of the solution}

For a given boundary value $f$ let $u$ denote the unique solution. As before, we write $v(\xi',z)=\widehat{u}(\xi',|\xi'|z)$, then
\begin{equation}
\begin{split}
 \|u\|_\lambda^2 ={}& \int_{\RR^n}|\xi|^{-a}\left\langle\Big[\Big(\frac{n}{2}-p-\frac{a-2}{2}\Big)i_\xi\varepsilon_\xi+\Big(\frac{n}{2}-p+\frac{a-2}{2}\Big)\varepsilon_\xi i_\xi\Big]\widehat{u}(\xi),\widehat{u}(\xi)\right\rangle\,d\xi\\
 ={}& \int_{\RR^{n-1}}|\xi'|^{1-a}\int_\RR(1+z^2)^{-\frac{a}{2}}\bigg\langle\Big[\Big(\frac{n}{2}-p-\frac{a-2}{2}\Big)i_\xi\varepsilon_\xi\\
 & \hspace{5cm}+\Big(\frac{n}{2}-p+\frac{a-2}{2}\Big)\varepsilon_\xi i_\xi\Big]v(\xi',z),v(\xi',z)\bigg\rangle\,dz\,d\xi'.
\end{split}\label{eq:NormOfSolution}
\end{equation}
Using the notation of \eqref{eq:DecompositionF} and \eqref{eq:SolutionV} we have
\begin{align*}
 i_\xi\varepsilon_\xi v(\xi',z) ={}& \frac{\sqrt{2}\Gamma(\frac{2-a}{2})}{(n-2p-a)\Gamma(\frac{1-a}{2})}|\xi'|\Big[(n-2p-a)(1+z^2)^{\frac{a}{2}}\widehat{f}_{\rm I}(\xi')\\
 & \hspace{5cm}+(n-2p+a-2)z^2(1+z^2)^{\frac{a-2}{2}}\widehat{\xi}'\wedge\widehat{f}_{\rm II}(\xi')\\
 & \hspace{5cm}-(n-2p+a-2)z(1+z^2)^{\frac{a-2}{2}}e_n\wedge\widehat{f}_{\rm II}(\xi')\Big],\\
 \varepsilon_\xi i_\xi v(\xi',z) ={}& \frac{\sqrt{2}\Gamma(\frac{2-a}{2})}{(n-2p-a)\Gamma(\frac{1-a}{2})}|\xi'|\Big[(n-2p-a+2)(1+z^2)^{\frac{a-2}{2}}\widehat{\xi}'\wedge\widehat{f}_{\rm II}(\xi')\\
 & \hspace{5cm}+(n-2p-a+2)z(1+z^2)^{\frac{a-2}{2}}e_n\wedge\widehat{f}_{\rm II}(\xi')\Big].\\
\end{align*}
Together this gives
\begin{multline*}
 \Big[\Big(\frac{n}{2}-p-\frac{a-2}{2}\Big)i_\xi\varepsilon_\xi+\Big(\frac{n}{2}-p+\frac{a-2}{2}\Big)\varepsilon_\xi i_\xi\Big]v(\xi',z)\\
 = \frac{(n-2p-a+2)\Gamma(\frac{2-a}{2})}{\sqrt{2}(n-2p-a)\Gamma(\frac{1-a}{2})}|\xi'|\Big[(n-2p-a)(1+z^2)^{\frac{a}{2}}\widehat{f}_{\rm I}(\xi')\\
 + (n-2p+a-2)(1+z^2)^{\frac{a}{2}}\widehat{\xi}'\wedge\widehat{f}_{\rm II}(\xi')\Big]
\end{multline*}
and hence
\begin{align*}
 & \left\langle\Big[\Big(\tfrac{n}{2}-p-\tfrac{a-2}{2}\Big)i_\xi\varepsilon_\xi+\Big(\tfrac{n}{2}-p+\tfrac{a-2}{2}\Big)\varepsilon_\xi i_\xi\Big]v(\xi',z),v(\xi',z)\right\rangle\\
 & \hspace{.5cm} = \frac{(n-2p-a+2)\Gamma(\frac{2-a}{2})^2}{(n-2p-a)^2\Gamma(\frac{1-a}{2})^2}\Big[(n-2p-a)^2(1+z^2)^{a-1}\|\widehat{f}_{\rm I}(\xi')\|^2\\
 & \hspace{1cm} + (n-2p+a-2)\big[(n-2p+a-2)z^2+(n-2p-a+2)\big](1+z^2)^{a-2}\|\widehat{f}_{\rm II}(\xi')\|^2\Big].
\end{align*}
Inserting this into \eqref{eq:NormOfSolution} gives
\begin{multline*}
 \|u\|_\lambda^2 = \frac{\sqrt{\pi}(n-2p-a+2)\Gamma(\frac{2-a}{2})}{(n-2p-a)\Gamma(\frac{1-a}{2})}\int_{\RR^{n-1}}|\xi'|^{1-a}\Big[(n-2p-a)\|\widehat{f}_{\rm I}(\xi')\|^2\\
 + (n-2p+a-2)\|\widehat{f}_{\rm II}(\xi')\|^2\Big]\,d\xi',
\end{multline*}
where we have used the following two integral formulas which follow from the Beta integral formula:
\begin{align*}
 & \int_\RR(1+z^2)^{\frac{a-2}{2}}\,dz = \frac{\sqrt{\pi}\Gamma(\frac{1-a}{2})}{\Gamma(\frac{2-a}{2})},\\
 & \int_\RR\big[(n-2p+a-2)z^2+(n-2p-a+2)\big](1+z^2)^{\frac{a-4}{2}}\,dz = (n-2p-a)\frac{\sqrt{\pi}\Gamma(\frac{1-a}{2})}{\Gamma(\frac{2-a}{2})}.
\end{align*}

\subsection{Norm of the boundary value}

On the other hand
$$ \|f\|_\nu^2 = \int_{\RR^{n-1}}|\xi'|^{1-a}\Big\langle\Big[\Big(\tfrac{n-1}{2}-2p-\tfrac{a-1}{2}\Big)i_{\widehat{\xi}'}\varepsilon_{\widehat{\xi}'}+\Big(\tfrac{n-1}{2}-2p+\tfrac{a-1}{2}\Big)\varepsilon_{\widehat{\xi}'}i_{\widehat{\xi}'}\Big]\widehat{f}(\xi'),\widehat{f}(\xi')\Big\rangle\,d\xi'. $$
Here
$$ i_{\widehat{\xi}'}\varepsilon_{\widehat{\xi}'}\widehat{f}(\xi') = \widehat{f}_{\rm I}(\xi') \qquad \mbox{and} \qquad \varepsilon_{\widehat{\xi}'}i_{\widehat{\xi}'}\widehat{f}(\xi') = \widehat{\xi}'\wedge\widehat{f}_{\rm II}(\xi'), $$
so that
$$ \|f\|_\nu^2 = \frac{1}{2} \int_{\RR^{n-1}} |\xi'|^{1-a} \Big[(n-2p-a)\|\widehat{f}_{\rm I}(\xi')\|^2+(n-2p+a-2)\|\widehat{f}_{\rm II}(\xi)\|^2\Big]\,d\xi' $$
and hence
$$ \|u\|_\lambda^2 = \frac{2\sqrt{\pi}(n-2p-a+2)\Gamma(\frac{2-a}{2})}{(n-2p-a)\Gamma(\frac{1-a}{2})}\|f\|_\nu^2. $$

\section{Dirichlet-to-Neumann map}\label{sec:DtoN}

In this section we prove Theorem~\ref{thm:DtoN}. For $s\in(0,1)$ let $a=1-2s\in(-1,1)$. Given $f\in\dot{H}^{\frac{1-a}{2},p}(\RR^{n-1})$ let $u=P_{a,p}f\in\dot{H}^{\frac{2-a}{2},p}(\RR^n)$ be the unique solution to \eqref{eq:IntroBdyValueProblem}. We use the difference quotient to compute the derivative:
\begin{align*}
  \lim_{x_n\searrow0}x_n^a\partial_{x_n}u(x',x_n) &= \lim_{x_n\searrow0}x_n^{a-1}(u(x',x_n)-u(x',0))\\
  &= \lim_{x_n\searrow0}x_n^{a-1}(P_{a,p}f(x',x_n)-f(x')).
\end{align*}
Viewing $f(x')$ as a constant form on $\RR^{n-1}$ and using Section~\ref{sec:ExplicitValues} we can write
\begin{align*}
  &= c_{a,p}\cdot\lim_{x_n\searrow0} \int_{\RR^{n-1}} \frac{1}{(|x'-y|^2+x_n^2)^{\frac{n-a+2}{2}}}(i_{x-y}\varepsilon_{x-y}-\varepsilon_{x-y}i_{x-y})(f(y)-f(x'))\,dy\\
  &= c_{a,p}\cdot\PV\int_{\RR^{n-1}} \frac{1}{|x'-y|^{n-a+2}}(i_{x'-y}\varepsilon_{x'-y}-\varepsilon_{x'-y}i_{x'-y})(f(y)-f(x'))\,dy\\
  &= c_{a,p}\Gamma(-s)L_{s,p}f(x').
\end{align*}

\appendix

\section{The Casimir computation}\label{sec:Casimir}

In this appendix we compute the action of the Casimir element $C$ of $G'$ in an arbitrary principal series representation $\pi_{\lambda,\xi}^\infty$ of $G$.

An explicit basis of the Lie algebra $\frakg=\frako(1,n+1)$ is given by the generator $H$ of $\fraka$ and the elements
\begin{align*}
 M_{jk} &:= E_{j+2,k+2}-E_{k+2,j+2}, && 1\leq j<k\leq n,\\
 X_j &:= E_{j+2,1}-E_{j+2,2}+E_{1,j+2}+E_{2,j+2}, && 1\leq j\leq n,\\
 \overline{X}_j &:= E_{j+2,1}+E_{j+2,2}+E_{1,j+2}-E_{2,j+2}, && 1\leq j\leq n.
\end{align*}
Here $M_{jk}$ span $\frakm$, $X_j$ span $\frakn$ and $\overline{X}_j$ span $\overline{\frakn}$.

The action of the generators $MA\overline{N}$ and $w_0$ of $G$ on
$I_{\lambda,\xi}^\infty$ is then given by
\begin{align*}
 \pi_{\lambda,\xi}^\infty(\overline{n}_{x'})f(x) &= f(x-x'), && \overline{n}_x\in\overline{N},\\
 \pi_{\lambda,\xi}^\infty(\diag(\lambda,\lambda,m))f(x) &= \xi(m)f(\lambda m^{-1}x), && \lambda\in {\rm O}(1),m\in {\rm O}(n),\\
 \pi_{\lambda,\xi}^\infty(e^{sH})f(x) &= e^{(\lambda+\rho)s}f(e^sx), && e^{sH}\in A,\\
 \pi_{\lambda,\xi}^\infty(w_0)f(x) &= \xi\Big(\1_n-2\tfrac{xx^\top}{|x|^2}\Big)|x|^{-2(\lambda+\rho)}f\left(-\frac{x}{|x|^2}\right).
\end{align*}
By differentiation we obtain the action of $\frakm$, $\fraka$ and $\overline{\frakn}$:
$$ d\pi_{\lambda,\xi}^\infty(M_{jk}) = x_j\frac{\partial}{\partial x_k}-x_k\frac{\partial}{\partial x_j}+d\xi(M_{jk}), \qquad d\pi_{\lambda,\xi}^\infty(H) = E+\lambda+\rho, \qquad d\pi_{\lambda,\xi}^\infty(\overline{X}_j) = -\frac{\partial}{\partial x_j}, $$
where $E=\sum_{k=1}^nx_k\frac{\partial}{\partial x_k}$ denotes the Euler operator on $\RR^n$. Thanks to the relation $\Ad(w_0)\overline{X}_j=-X_j$ we have
$$ d\pi_{\lambda,\xi}^\infty(X_j) = -\pi_{\lambda,\xi}^\infty(w_0)d\pi_{\lambda,\xi}^\infty(\overline{X}_j)\pi_{\lambda,\xi}^\infty(w_0), $$
which is easily shown to be equal to
$$ d\pi_{\lambda,\xi}^\infty(X_j) = -|x|^2\frac{\partial}{\partial x_j}+2x_j(E+\lambda+\rho)-2\sum_{i=1}^nx_i\,d\xi(M_{ij}). $$
Note that $M_{jj}=0$ and $M_{ij}=-M_{ji}$.

The Casimir element $C$ can be expressed using the above constructed basis of $\frakg$:
$$ C = H^2-(n-1)H-\sum_{1\leq j<k\leq n-1}M_{jk}^2+\sum_{j=1}^{n-1}X_j\overline{X}_j. $$
An elementary calculation using the previously derived formulas for the differential representation shows that
\begin{multline}
 d\pi_{\lambda,\xi}^\infty(C) = x_n^2\Delta+2(\lambda+1)x_n\frac{\partial}{\partial x_n}
-2x_n\sum_{j=1}^{n-1}d\xi(M_{jn})\frac{\partial}{\partial x_j}\\
 -\sum_{1\leq j<k\leq n-1}d\xi(M_{jk})^2+(\lambda+\rho)(\lambda-\rho+1),
\label{eq:FormulaCasimirReal}
\end{multline}
where $\Delta=\sum_{k=1}^n\frac{\partial^2}{\partial x_k^2}$
denotes the Euclidean Laplacian on $\RR^n$.


\section{Comparison with the Laplace--Beltrami operator on the hyperbolic upper half plane}\label{sec:Laplacian}

In this appendix we compute the Laplace-Beltrami operator on $p$-forms on the hyperbolic upper half plane
$$ \HH^n=\{x\in\RR^n:x_n>0\} $$
and compare it with the operator $\Delta_{a,p}$. The upper half plane $\HH^n$ is equipped with the hyperbolic metric $g=x_n^{-2}(dx_1^2 +\cdots +dx_n^2)$, so that $g_{ij}=x_n^{-2}\delta_{ij}$. The Riemannian volume form is $dv=x_n^{-n} dx$ since $\det(g_{ij})=x_n^{-2n}$. Further, the corresponding inner product $\llangle\cdot,\cdot\rrangle_x$ at $x\in\HH^n$ is given by
$$ \llangle\alpha(x),\beta(x)\rrangle_x = x_n^{2p}\langle\alpha(x),\beta(x)\rangle \qquad \forall\,\alpha,\beta\in\Omega^p(\HH^n), $$
where $\langle\cdot,\cdot\rangle$ is the standard inner product induced from the Euclidean metric. We write
$$ \llangle\alpha,\beta\rrangle = \int_{\HH^n}\llangle\alpha(x),\beta(x)\rrangle_x\,dv = \int_{\HH^n}\langle\alpha(x),\beta(x)\rangle x_n^{2p-n}\,dx $$
for the corresponding inner product on $p$-forms $\alpha,\beta$.

We first compute the codifferential $d^\ast$ on $\Omega^p(\HH^n)$. For this we write $\alpha=\sum_I \alpha_I dx_I\in\Omega^{p-1}(\HH^n)$ and $\beta=\sum_J\beta_J dx_J\in\Omega^p(\HH^n)$, then
\begin{align*}
 \llangle\alpha,d^*\beta\rrangle &= \llangle d\alpha,\beta\rrangle\\
 &= \int_{\HH^n} \langle d\alpha(x), \beta(x)\rangle x_n^ {2p-n}\,dx\\
 &= \sum_{I,J}\sum_{j=1}^n \int_{\HH^n} \frac{\partial\alpha_I}{\partial x_j}(x)\beta_J(x)\langle \varepsilon_{e_j}dx_I,dx_J\rangle x_n^{2p-n}\,dx\\
 &= -\sum_{I,J}\sum_{j=1}^n \int_{\HH^n} \alpha_I(x)\frac{\partial}{\partial x_j}\big[\beta_J(x)x_n^{2p-n}\big]\langle dx_I,i_{e_j}dx_J\rangle\,dx,
\end{align*}
where we have integrated by parts in the last step. Clearly,
$$ \frac{\partial}{\partial x_j}\big[\beta_J(x)x_n^{2p-n}\big] = \frac{\partial\beta_J}{\partial x_j}(x)x_n^{2p-n}+(2p-n)\delta_{jn}\beta_J(x)x_n^{2p-n-1}, $$
so that
\begin{multline*}
 \llangle\alpha,d^*\beta\rrangle = -\sum_{I,J}\sum_{j=1}^n\int_{\HH^n}\Big\langle\alpha_I(x)dx_I,x_n^2\frac{\partial\beta_J}{\partial x_j}(x)i_{e_j}dx_J\\
 +(2p-n)\delta_{jn}x_n\beta_J(x)i_{e_n}dx_J\Big\rangle x_n^{2(p-1)-n}\,dx.
\end{multline*}
It follows that
$$ d^* = x_n^2\delta-(2p-n)x_ni_{e_n}, $$
where $\delta=-\sum_ji_{e_j}\frac{\partial}{\partial x_j}$ is the Euclidean codifferential. Therefore, the hyperbolic Laplace--Beltrami operator $\square_p=-(d^*d+dd^*)$ on $p$-forms takes the following form:
$$ \square_p = -x_n^2(\delta d + d\delta) + (2(p+1)-n) x_ni_{e_n} d + (2p-n)(\varepsilon_{e_n}i_{e_n} + x_n di_{e_n}) - 2x_n \varepsilon_{e_n}\delta. $$
The first term is $-x_n^2(\delta d + d\delta)=x_n^2\Delta$, where $\Delta=\sum_j\frac{\partial^2}{\partial x_j^2}$ is the Euclidean Laplacian. In the second, third and fourth term we can write
\begin{align*}
 i_{e_n}d = i_{e_n}d' + i_{e_n}\varepsilon_{e_n}\frac{\partial}{\partial x_n}, \qquad di_{e_n} = \frac{\partial}{\partial x_n} - i_{e_n} d \qquad \varepsilon_{e_n}\delta = \varepsilon_{e_n}\delta' - \varepsilon_{e_n}i_{e_n}\frac{\partial}{\partial x_n},
\end{align*}
with $d'$ and $\delta'$ as in \eqref{eq:DefDandDelta}. Using $\varepsilon_{e_n}i_{e_n}+i_{e_n}\varepsilon_{e_n}=\id$ this gives
$$ \square_p = x_n^2 \Delta + (2(p+1)-n) x_n\frac{\partial}{\partial x_n} + 2x_n(i_{e_n}d'-\varepsilon_{e_n}\delta') - (n-2p)\varepsilon_{e_n}i_{e_n} = \Delta_{2(p+1)-n,p}. $$

We note that for any $a\in\RR$ we have
$$ \square_p\Big[x_n^{\frac{a+n-2p-2}{2}}u(x)\Big] = x_n^{\frac{a+n-2p-2}{2}}\Big[\Delta_{a,p}+\frac{1}{4}(a+n-2p-2)(a-n+2p)\Big]u(x), $$
so that the map $u(x)\mapsto x_n^{\frac{a+n-2p-2}{2}}u(x)$ transforms the Laplace--Beltrami operator $\square_p$ up to a constant into the operator $\Delta_{a,p}$. For the case $p=0$ this was already observed in \cite[end of Section 1.1]{MOZ16a}.

\providecommand{\bysame}{\leavevmode\hbox to3em{\hrulefill}\thinspace}

\providecommand{\href}[2]{#2}

\end{document}